\newcommand{\Z}{\mathbb Z}
\newcommand{\R}{\mathbb R}
\newcommand{\K}{\mathbb K}
\newcommand{\C}{\mathbb C}
\newcommand{\pp}{\mathsf{p}}
\newcommand{\qq}{\mathsf{q}}
\newcommand{\mcQ}{\mathcal Q}
\newcommand{\mcD}{\mathcal D}
\newcommand{\mcM}{\mathcal M}
\newcommand{\mcN}{\mathcal N}
\newcommand{\mcO}{\mathcal O}
\newcommand{\mcP}{\mathcal P}
\newcommand{\mcU}{\mathcal U}
\newcommand{\mcV}{\mathcal V}
\newcommand{\mcI}{\mathcal I}
\newcommand{\mcW}{\mathcal W}
\newcommand{\F}{\mathrm{F}}
\newcommand{\id}{\mathrm{id}}
\newcommand{\GL}{\operatorname{GL}}
\newcommand{\Deck}{\operatorname{Deck}}
\newcommand{\w}{\operatorname{w}} 
\newtheorem{theorem}{Theorem}
\newtheorem{lemma}[theorem]{Lemma}
\newtheorem{definition}[theorem]{Definition}
\newtheorem{proposition}[theorem]{Proposition}
\newtheorem{example}[theorem]{Example}
\newtheorem{remark}[theorem]{Remark}
\begin{document}
	\title[multiplicity-free covering of a graded manifold]
	{multiplicity-free covering of\\
 a graded manifold}

	\author{Elizaveta Vishnyakova}
	
	\begin{abstract}
				We define and study a multiplicity-free covering of a graded mani\-fold. We compute its deck transformation group, which is isomorphic to the permutation group $S_n$. We show that it is not possible to construct a covering of a graded manifold in the category of $n$-fold vector bundles.  As an application of our research, we give a new conceptual proof of the equivalence of the categories of graded manifolds and symmetric $n$-fold vector bundles. 
			\end{abstract}
		
	\maketitle

\section{Introduction}

Let $H$ be a finitely generated abelian group together  with a homomorphism $\phi :H\to \Z_2$ for supermanifolds (=$\Z_2$-graded manifolds) or a homomorphism $\psi: H\to \Z$  for $\Z$-graded manifolds. In \cite{Vicovering} a graded covering of a supermanifold was constructed corresponding to the homomorphism $H=\Z\to \Z_2$, $n\mapsto \bar n$, where $\bar n$ is the parity of the integer $n$.  This graded covering is an infinite-dimensional $\Z$-graded manifold, which is an extension of a construction suggested by Donagi and Witten in
\cite{Witten not projected, Witten Atiyah classes}.  Furthermore, in \cite{FernandoVish} the graded coverings for supermanifolds corresponding to homomorphisms $\phi :H\to \Z_2$, where $H$ is a finite abelian group, were constructed and studied. These constructions are related to the notion of {\it arc space} or {\it loop space}, see, for example, \cite{Kapranov},

In this paper, we construct the graded covering for the homomorphism 
$$
\chi: \Z^n \to \Z, \quad (k_1, \ldots, k_n) \longmapsto k_1+ \cdots+ k_n
$$ 
of multiplicity-free type; see the definitions in the main text.  More precisely, we define the category of multiplicity-free mani\-folds, which is equivalent to the category of $n$-fold vector bundles. Furthermore, we prove that for any graded manifold $\mcN$ there exists a unique up to isomorphism multiplicity-free covering $\mcP$ of $\mcN$ together with the covering projection $\pp: \mcP\to \mcN$. In more detail for any graded manifold $\mcN$ we construct a unique up to isomorphism object $\mcP$ in the category of multiplicity-free manifolds, which satisfies a universal property as in the topological case.

In \cite{Fernando,JL} for $n=2$ and in \cite{BGR,HJ,Vish, Cueca} for any integer $n\geq 2$, it was shown that the category of graded manifolds is equivalent to the category of symmetric $n$-fold vector bundles, that is, $n$-fold vector bundles with an action of the permutation group $S_n$.   (The idea of considering a symmetric $n$-fold vector bundle appeared independently in \cite{Fernando} and \cite{BGR}.) As an application of our construction, we give a new conceptual proof of this result. For example, we show that, in fact, $S_n$ is the fundamental group or the deck transformation group of the covering $\pp: \mcP\to \mcN$. 

Consider the following classical example of a topological covering
$$
\pp: \mathbb R \to S^1, \quad \pp(x) = \exp(2\pi i x).
$$ 
Let $f$ be a continuous function on $S^1$. Clearly, $\pp^*(f)$ is a $1$-periodic function on $\mathbb R$. Conversely, for any $1$-periodic function $F$ on $\mathbb R$ there exists a unique function $f$ on $S^1$ such that $F=\pp^*(f)$. We can reformulate the last statement in several different equivalent ways: the function $F$ is $1$-periodic;  the function $F$ is $\Z$-invariant, where the action of $\Z$ on $\mathbb R$ is given in the natural way;  the function $F$ is invariant with respect to the following group of diffeomorphisms
$$\Deck(\pp)= \{ \Phi:\mathbb R \to \mathbb R \,\,  | \,\,  
 \pp\circ \Phi = \pp\}\simeq \Z,
 $$
 which is called the deck transformation group or the covering transformation group. An analog of this group we define for our multiplicity-free covering  $\pp: \mcP\to \mcN$. In detail, we define the deck transformation group $\Deck(\chi)$, where $\chi: \Z^n \to \Z$ is as above, for our covering and we show that for the covering of multiplicity-free type we have
$$
\Deck(\chi)\simeq S_n.
$$
This explains, why one considers symmetric $n$-fold vector bundles.

Let us describe our ideas in more detail.  First of all, we replace the category of $n$-fold vector bundles considered by \cite{Fernando,JL,BGR,HJ,Vish, Cueca}  with an equivalent category of multiplicity-free manifolds of type $\Delta_n$, see the main text for definitions. We also consider the category of $n$-fold vector bundles of type $\Delta\subset \Delta_n$ and the corresponding category of multiplicity-free manifolds of type $\Delta$. Denote $L=\Delta/S_n$, where $S_n$ is the permutation group acting on $\Delta$. We show that in the category of multiplicity-free manifolds any graded manifold $\mcN$ of type $L$ can be assigned its multiplicity-free covering $\mcP$ of type $\Delta$. The multiplicity-free covering $\mcP$ satisfies a universal property as in the topological case.  We also show that a covering of a graded manifold does not exist in the category of $n$-fold vector bundle.

Let $\pp:\mcP\to\mcN$ be the covering projection. We show that $\mcP$ possesses an action of the deck transformation group $\Deck(\chi)\simeq S_n$, in other words, $\mcP$ is a symmetric multiplicity-free manifold. 
Furthermore, as in the case of $\pp: \mathbb R \to S^1$, for any graded function $f\in \mcO_{\mcN}$ the image $\pp^*(f)$ is $\Deck(\chi)$-invariant. And, conversely, if a function $F\in \mcO_{\mcP}$ is $\Deck(\chi)$-invariant, then $F= \pp^*(f)$ for some graded function $f\in \mcO_{\mcN}$.

Further, we prove that any symmetric multiplicity-free manifold can be regarded as a multiplicity-free covering of a graded manifold. In addition, if $\psi:\mcN \to \mcN'$ is a morphism of graded manifolds, then there exists a unique multiplicity-free lift $\Psi:\mcP\to \mcP'$ of $\psi$, which commutes with the covering projections $\pp:\mcP\to\mcN$ and $\pp:\mcP'\to\mcN'$.  The lift $\Psi$ is symmetric or $\Deck(\chi)$-equivariant.  In addition, for any $\Deck(\chi)$-equivariant morphism $\Psi': \mcP\to\mcP'$ there exists a unique morphism $\psi':\mcN\to \mcN'$ such that $\Psi'$ is its multiplicity-free lift.

Algebraically, a description of $\mcO^{S_n}_{\mcP}$ is related to Chevalley – Shephard – Todd Theorem, see Section \ref{sec inv polynim}. Indeed, the main observation here is that the algebra of $S_n$-invariant polynomials modulo multiplicities is generated by linear $S_n$-invariant polynomials.

\bigskip

\textbf{Acknowledgments:} E.V. was partially  supported by 
by FAPEMIG, grant APQ-01999-18, FAPEMIG grant RED-00133-21 - Rede Mineira de Matem\'{a}tica, CNPq grant 402320/2023-9. The author thanks Alejandro Cabrera and Matias del Hoyo for a very useful discussion.  We also thank Henrique Bursztyn for discussions and hospitality in IMPA after which we understood the nature of the equivalence of the categories of graded manifolds and symmetric $n$-fold vector bundles.

\section{Preliminaries}

\subsection{Graded manifolds}\label{sec graded manifolds} The theory of graded manifolds is used, for example, in modern mathematical physics and Poisson geometry. More information on this theory can be found in \cite{Fei, Ji, Jubin, KotovSalnikov, Roytenberg, Vysoky}. Throughout this paper, we work on the fields $\K=\R$ or $\C$. 
To define a graded manifold let us consider a $\Z$-graded finite dimensional vector superspace $V$ of the following form:
\begin{equation}\label{eq Vectsuper}
V = V_0\oplus V_1\oplus \cdots \oplus V_n.
\end{equation}
We put $L_n:= \{0,\beta, 2\beta \ldots, n\beta\}$, where $\beta$ is a formal even or odd variable, the parity of $\beta$ is fixed, and 
$$
L:= \{k\beta\in L_n\,\,|\,\, \dim V_k\ne 0\}.
$$ 
We will call $L$ support of $V$, or we will say that $V$ is of type $L$.  

For any homogeneous element $v\in V_i\setminus \{0\}$ we assign the weight $\w (v) :=i\beta\in L$  and the parity $|v| = \bar i\cdot |\beta|\in \Z_2=\{\bar 0,\bar 1\}$, where $|\beta|\in\Z_2$ is the parity of $\beta$ and $\bar i$ is the parity of $i$.  Denote by $S^*(V)$ the super-symmetric algebra of $V$.  If $v = v_1 \cdots v_k\in S^*(V)$ is a product of homogeneous elements $v_i\in V_{q_i}\setminus \{0\}$, then as usual we put
\begin{align*}
   \w (v) & = \w (v_1) +\cdots +\w (v_k) = (q_1+\cdots + q_k)\beta ; \\
  |v| &= |v_1| + \cdots +|v_k|  \in \Z_2.
\end{align*}
We also have
$$
v_1\cdot v_2 = (-1)^{|v_1| |v_2|} v_2\cdot v_1.
$$
  Therefore $S^*(V)$ is a $\Z$-graded vector superspace.

 Now we are ready to define a graded manifold. 
 Consider a ringed space $\mathcal U = (\mathcal U_0,\mathcal O_{\mathcal U})$, where $\mathcal U_0 \subset V^*_0$ is an open set and 
\begin{equation}\label{eq graded domain, def of the sheaf}
\mathcal O_{\mathcal U}=
\mathcal F_{\mathcal U_0}\otimes_{S^*(V_0)} S^*(V).
\end{equation}   
Here, $\mathcal F_{\mathcal U_0}$ is the sheaf of smooth or holomorphic functions on $\mathcal U_0$.  (Note that the tensor product in (\ref{eq graded domain, def of the sheaf}) is considered in the category of sheaves, so the result $\mathcal O_{\mathcal U}$ is a sheaf, not only a presheaf.)  We call the ringed space $\mathcal U$ a  {\it graded domain of type $L$ and of dimension $\{n_{k}\}$}, where $n_{k} := \dim V_{k}$, $k=0,\ldots, n$. Further, let us choose a basis  $(x_i)$, $i=1,\ldots, n_0$, in $V_0$ and a basis $(\xi_{j_k}^{k})$, where $j_k=1,\ldots, n_k$, in $V_{k}$ for any $k=1,\ldots, n$. Then the system $(x_i, \xi_{j_k}^{k})$ is called a system of local graded coordinates in $\mathcal U$. Recall that, $x_i$ has weight $0$ and parity $\bar 0$  and $\xi_{j_k}^{k}$ has weight $\w( \xi_{j_k}^{k}) = k\beta$ and parity $|\xi_{j_k}^{k}|= \bar k|\beta|$.

The sheaf $\mathcal O_{\mathcal U}= (\mathcal O_{\mathcal U})_{\bar 0} \oplus (\mathcal O_{\mathcal U})_{\bar 1}$ is naturally  $\Z_2$-graded. This sheaf  is also $\Z$-graded in the following sense: for any element $f\in \mathcal O_{\mathcal U}(U)$, where $U\subset \mathcal U_0$ is open,  and any point $x\in U$ there exists an open neighborhood  $U'$ of $x$ such that $f|_{U'}$ is a finite sum of homogeneous polynomials in coordinates $(\xi_{j_k}^{k})$ with functional coefficient in $(x_i)$. The element $f$ defined in $U$ may be an infinite sum of homogeneous elements.

Let $\mcU$ and $\mcU'$ be two graded domains with graded coordinates $(x_a, \xi_{b_i}^{i})$ and $(y_c, \eta_{d_j}^{j})$, respectively. A {\it morphism $\Phi: \mcU\to \mcU'$ of graded domains} is a morphism of the corresponding $\Z$-graded ringed spaces such that $\Phi^*|_{(\mcO_{\mcU'})_0}: (\mcO_{\mcU'})_0 \to (\Phi_0)_*(\mcO_{\mcU})_0$ is local, that is, it is a usual morphism of smooth or holomorphic domains. Clearly, such a morphism is determined by images of local coordinates $\Phi^*(y_c)$ and $\Phi^*(\eta_{d_j}^{j})$. Conversely, if we have the following set of functions 
\begin{equation}\label{eq mor of graded domains}
\Phi^*(y_c) = F_c\in (\mcO_{\mcU})_0(\mcU_0)\quad  \text{and } \quad \Phi^*(\eta_{d_j}^{j}) =  F_{d_j}^{j}\in (\mcO_{\mcU})_j(\mcU_0),\quad j>0,
\end{equation}
such that $(F_1(u),\ldots,F_{n_0}(u))\in \mcU'_0$ for any $u\in \mcU_0$, than there exists unique morphism $\Phi:  \mcU\to \mcU'$   of graded domains compatible with (\ref{eq mor of graded domains}).

A {\it graded manifold of type $L$ and of dimension $\{n_{k}\}$, $k=0,\ldots, n$,} is a $\mathbb Z$-graded ringed space $\mathcal N = (\mathcal N_0, \mathcal O_{\mathcal N})$, which is locally isomorphic to a graded domain of degree $n$ and of dimension $\{n_{k}\}$, $k=0,\ldots, n$.  More precisely, we can find an atlas $\{U_i\}$ on $\mcN_0$ and  isomorphisms $\Phi_i :(U_i, \mcO_{\mcN}|_{U_i}) \to \mcU_i$ of $\mathbb Z$-graded ringed spaces such that $\Phi_i\circ (\Phi_j)^{-1}: \mcU_j\to \mcU_i$ is an isomorphism of graded domains. A  {\it morphism of graded manifolds $\Phi=(\Phi_0,\Phi^*):\mcN\to \mcN_1$} is a morphism of the corresponding $\mathbb Z$-graded ringed spaces, which is locally a morphism of graded domains.

\subsection{$n$-fold vector bundles}\label{sec $n$-fold vector bundles} 

We define an $n$-fold vector bundle using the language of graded manifolds. This definition of an $n$-fold vector bundle is equivalent to a classical one as shown in \cite[Theorem 4.1]{GR}, see also \cite{Vor}.  Let us choose $n$ formal generators $\alpha_1,\ldots, \alpha_n$ of the same parity $|\alpha_1|= \cdots= |\alpha_n|\in \Z_2$. In other words, all $\alpha_i$ are even or odd.  Denote by $\Delta_n\subset \Z^n$ the set of all possible linear combinations of $\alpha_i$ with coefficient $0$ or $1$. Such linear combinations are what we will call {\it multiplicity-free}.  For example we have 
  \begin{align*}
     & \Delta_1 = \{0,\, \alpha_1\}, \quad 
      \Delta_2 = \{0, \, \alpha_1, \, \alpha_2, \, \alpha_1+\alpha_2 \},\\
      \Delta_3 = \{0, \, &\alpha_1, \, \alpha_2, \, \alpha_3, \, \alpha_1+\alpha_2, \, \alpha_1+\alpha_3, \, \alpha_2+\alpha_3, \, \alpha_1+\alpha_2+ \alpha_3  \}.
  \end{align*}
 A subset $\Delta\subset \Delta_n$, which contains $0$, we will call a {\it multiplicity-free weight system}. In addition, the parity $|\delta|\in \Z_2$ of any $\delta = \alpha_{i_1} + \cdots + \alpha_{i_p} \in \Delta$  is the sum of the parities of the terms $\alpha_{i_j}$. We denote by $\sharp \delta$ the {\it length of the weight $\delta\in \Delta$}. More precisely, we put 
$$
\sharp \delta =\sharp (\alpha_{i_1} + \cdots + \alpha_{i_p}) = p.
$$

Let us take a multiplicity-free weight system $\Delta$ with fixed parities of $\alpha_i$. (Recall that we assume that all $\alpha_i$ have the same parity.)   Consider the following finite-dimensional 
$\Delta$-graded  vector space $V$ over $\mathbb K$
 $$
  V= \bigoplus_{\delta \in \Delta} V_{\delta}.
 $$
  We say that the elements of $V_{\delta}\setminus \{0\}$ have weight $\delta\in \Delta$ and parity $|\delta|\in\Z_2$. 
Furthermore, we denote by $S^*( V)$ the super-symmetric power of $V$. The weight of a product of homogeneous elements is the sum of weights of factors, and the same for parities.  $S^*( V)$ is a $\Z^n$-graded vector space with respect to the weight of elements.

Consider the $\mathbb Z^n$-graded ringed space $\mathcal V = (\mathcal V_0,\mathcal O_{\mathcal V})$, where $\mathcal V_0 \subset V^*_0$, and the sheaf $\mathcal O_{\mathcal V}$ is defined in the following way
\begin{equation}\label{eq structure sheaf of delta domain}
\mathcal O_{\mathcal V}: = \mathcal F_{\mathcal V_0}\otimes_{S^*(V_0)} S^*(V).
\end{equation}
Here $\mathcal F_{\mathcal V_0}$ is the sheaf of smooth (the case $\mathbb K= \mathbb R$) or holomorphic (the case $\mathbb K= \mathbb C$) functions on $\mathcal V_0 \subset V^*_0$.  (Note that the tensor product in (\ref{eq structure sheaf of delta domain}) is considered in the category of sheaves, so the result $\mathcal O_{\mathcal V}$ is a sheaf, not only a presheaf.) The sheaf $\mcO_{\mcV}$ is $\Z^n$-graded in the same sense as the structure sheaf of a graded manifold, see Section 
\ref{sec graded manifolds}.

 Let us choose a basis $(x_i)$ in $V_0$, where $i=1,\ldots, \dim V_0$, and a basis $(t_{j_{\delta}}^{\delta})$, where $j_{\delta} = 1,\ldots, \dim V_{\delta}$, in any $ V_{\delta}$ for any $\delta\in \Delta\setminus \{0\}$. Then the system $(x_i, t_{j_{\delta}}^{\delta})_{\delta \in \Delta\setminus \{0\}}$ is called the system of local coordinates in $\mathcal V$. We assign the weight $0$ and the parity $\bar 0$ to any $x_i$ and the weight $\delta$ and the parity $|\delta|$ to any $t_{j_{\delta}}^{\delta}$.
We will call the ringed space $\mathcal V$ a {\it graded domain of type $\Delta$, with fixed parity $|\alpha_i| \in \mathbb Z_2$ (independent on $i$) and of dimension $\{\dim V_{\delta}\}_{\delta \in \Delta}$} or just a {\it graded domain of type $\Delta$}.

A {\it morphism $\Phi: \mcV\to \mcV'$ of graded domains of type $\Delta$} is a morphism of the corresponding $\Z^n$-graded ringed spaces such that $\Phi^*|_{(\mcO_{\mcV'})_0}: (\mcO_{\mcV'})_0 \to (\Phi_0)_*(\mcO_{\mcV})_0$ is local, that is, it is a usual morphism of smooth or holomorphic domains. Clearly, such a morphism is determined by images $\Phi^*(y_c)$ and $\Phi^*(q_{s_{\delta}}^{\delta})$ of local graded coordinates $(y_c,q_{s_{\delta}}^{\delta})_{\delta\in \Delta\setminus \{0\}}$ of $\mcV'$. Conversely, if the following set of functions is given
\begin{equation}\label{eq mor of graded domains2}
\Phi^*(y_c) =F_c\in (\mcO_{\mcV})_0(\mcV_0)\quad  \text{and } \quad \Phi^*(q_{s_{\delta}}^{\delta})  =F_{s_{\delta}}^{\delta} \in (\mcO_{\mcV})_{\delta}(\mcV_0),
\end{equation}
such that $(F_1(u),\ldots,F_{\dim V_0}(u))\in \mcV'_0$ for any $u\in \mcV_0$, than there exists unique morphism $\Phi:  \mcV\to \mcV'$   of graded domains of type $\Delta$ compatible with (\ref{eq mor of graded domains2}).

A {\it graded manifold of type $\Delta$, with fixed parity $|\alpha_i| \in \mathbb Z_2$ (independent on $i$) and of dimension $\{\dim V_{\delta}\}$, $\delta\in \Delta$,} is a $\mathbb Z^n$-graded ringed space $\mathcal D = (\mathcal D_0, \mathcal O_{\mathcal D})$, that is locally isomorphic to a graded domain of type $\Delta$, with fixed parity $|\alpha_i| \in \mathbb Z_2$ and of dimension $\{\dim V_{\delta}\}$, $\delta\in \Delta$.  More precisely, we can find an atlas $\{V_i\}$ of $\mathcal D_0$ and  isomorphisms $\Phi_i :(V_i, \mcO_{\mathcal D}|_{V_i}) \to \mcV_i$ of $\Z^n$-graded ringed spaces such that $\Phi_i\circ (\Phi_j)^{-1}: \mcV_j\to \mcV_i$ is an isomorphism of graded domains of type $\Delta$. Sometimes $\mathcal D$ will be just called a {\it graded manifold of type $\Delta$}.

A  {\it morphism of graded manifolds $\Phi=(\Phi_0,\Phi^*):\mathcal D\to \mathcal D_1$ of type $\Delta$} is a morphism of the corresponding $\mathbb Z^n$-graded ringed spaces, which is locally a morphism of graded domains of type $\Delta$. Given a graded manifold of type $\Delta\subset \Delta_n$, then we can define in a unique way up to isomorphism an $n$-fold vector bundle, see \cite[Theorem 4.1]{GR}. Any $n$-fold vector bundle is obtained in this way.

\subsection{Multiplicity-free manifolds}\label{sec multiplicity-free manifolds} 

The category of multiplicity-free manifolds of type $\Delta$ is a category, which is equivalent to the category of $n$-fold vector bundles of type $\Delta$. Let $\Delta$ be as in Section \ref{sec $n$-fold vector bundles} and let $\mcD=(\mcD,\mcO_{\mcD})$ be a graded manifold of type $\Delta$ with fixed parity $|\alpha_i| \in \mathbb Z_2$ (independent on $i$) and of dimension $\{\dim V_{\delta}\}$, $\delta\in \Delta$, also as in Section \ref{sec $n$-fold vector bundles}. Let $f\in \mcO_{\mcD}$ be a homogeneous element of weight $\gamma$. We call the element $f$ {\it non-multiplicity-free}, if $\gamma$ is not a multiplicity-free weight. Now denote by $\mcI_{\mcD}  \subset \mcO_{\mcD}$ the sheaf of ideals generated locally by non-multiplicity-free elements. For example, if $t^{\gamma}_1$ and $t_1^{\gamma}$ are local coordinates of $\mcD$ of weight $\gamma\in \Delta$, then $t_1^{\gamma}\cdot t_2^{\gamma} \in \mcI_{\mcD}$, since this product has weight $2\gamma$. 

\begin{definition}\label{def mult free manifold Delta}
    The ringed space $ \widehat{\mcD} := (\mcD_0, \mcO_{\mcD}/\mcI_{\mcD} )$ is called a multiplicity-free manifold of type $\Delta$ with fixed parity $|\alpha_i| \in \mathbb Z_2$ (independent on $i$) and of dimension $\{\dim V_{\delta}\}$, $\delta\in \Delta$. 
\end{definition}

 If $\mcD_1,\mcD_2$ are $\Z^n$-graded manifolds of type $\Delta$ and $\Phi=(\Phi_0,\Phi^*): \mcD_1\to \mcD_2$ is a morphism that preserves weights, then the morphism $\widehat\Phi =(\Phi_0,\widehat\Phi^*) : \widehat\mcD_1\to \widehat\mcD_2$, where
$\widehat\Phi^*: \mcO_{\mcD_2}/\mcI_{\mcD_2} \to \mcO_{\mcD_1}/\mcI_{\mcD_1}$ is naturally defined. 
We define the category of {\it multiplicity-free manifolds of type $\Delta$}, as the category with objects $\widehat{\mcD}$ and with morphisms $\widehat\Phi$. Clearly, $\widehat\Phi_2\circ \widehat\Phi_1 = \widehat{\Phi_2\circ \Phi_1}$.

\begin{remark}
Let $\Phi: \mcD\to \mcD'$ be a morphism of graded manifolds of type $\Delta$. Any such morphism is locally defined by the images $\Phi^*(y_c),\Phi^*(q_{s_{\delta}}^{\delta})$ of local coordinates; see (\ref{eq mor of graded domains2}). Since $\Phi^*(y_c),\Phi^*(q_{s_{\delta}}^{\delta})$ has multiplicity-free weights, the morphism $\widehat\Phi$  is locally defined by the same formulas. Furthermore, if $\Phi_i: \mcD\to \mcD'$, $i=1,2$, are two different morphisms, then $\widehat\Phi_1\ne \widehat\Phi_2$, since these morphisms are different in an open set. Since the functor $\mcD\mapsto \widehat\mcD$, $\Phi\mapsto \widehat\Phi$ defines an equivalence of the category of graded manifolds of type $\Delta$ and the category of multiplicity-free  manifolds of type $\Delta$, see Definition \ref{de equivalence of cate}.
     \end{remark}

\begin{remark}
    A multiplicity-free manifold $\widehat\mcD$ of type $\Delta$ is $\Z^n$-graded, since the ideal $\mcI_{\mcD}$ is generated by homogeneous elements. 
    In addition, the structure sheaves of $\mcD$ and $\widehat\mcD$ are different. In the sheaf $\mcO_{\mcD}$ we have
    $$
    (\xi^{\alpha_1}_1 + \xi^{\alpha_2}_1) \cdot (\xi^{\alpha_1}_2 + \xi^{\alpha_2}_2) = \xi^{\alpha_1}_1 \xi^{\alpha_1}_2 + \xi^{\alpha_2}_1 \xi^{\alpha_1}_2 + \xi^{\alpha_1}_1 \xi^{\alpha_2}_2 + \xi^{\alpha_2}_1 \xi^{\alpha_2}_2.
    $$
    In the sheaf $\mcO_{\widehat\mcD}$ we have
    $$
    (\xi^{\alpha_1}_1 + \xi^{\alpha_2}_1) \cdot (\xi^{\alpha_1}_2 + \xi^{\alpha_2}_2) =  \xi^{\alpha_2}_1 \xi^{\alpha_1}_2 + \xi^{\alpha_1}_1 \xi^{\alpha_2}_2.
    $$
\end{remark}

Let $\widehat\mcD$ be a multiplicity-free manifold of type $\Delta\subset \Delta_n$ and $\mcN$ be a graded manifold of type $L\subset L_n$. The multiplicity-free manifold $\widehat\mcD$ is $\Z$-graded. The grading is given by length of weights $\sharp\delta$, $\delta\in \Delta$. Assume that the parities of the formal variables $\alpha_i$ and $\beta$, the generators of $\Delta_n$ and $L_n$, respectively, are equal. (Recall that all $\alpha_i$ have the same parity.) We define a morphism $\phi =(\phi_0,\phi^*): \widehat \mcD \to \mcN$ as a $\Z$-graded morphism of ringed spaces such that $\phi_0$ is a smooth or holomorphic map.  Clearly, such morphisms are defined by images of local coordinates.

\section{multiplicity-free covering of a graded manifold}

\subsection{Multiplicity-free covering of a graded domain}\label{sec multiplicity-free covering of a graded domain}
Let $\Delta_n$ be as in Section \ref{sec $n$-fold vector bundles}. We define an action of the permutation group $S_n$ on $\Delta_n$ in the following natural way
$$
s\cdot (\alpha_{i_1} + \cdots+ \alpha_{i_p}) = \alpha_{s\cdot i_1} + \cdots+ \alpha_{s\cdot i_p}, \quad s\in S_n.
$$

\begin{definition}
Let $\Delta\subset \Delta_n$ be a multiplicity-free weight system.  
 We say that $\Delta$ is $S_n$-invariant if $\Delta\subset \Delta_n$ is an $S_n$-invariant set.  
\end{definition}

Denote by $L_n:= \Delta_n/S_n$ the factor space. If $\gamma\in \Delta_n$ has length $\sharp\gamma = k$, then any element in the $S_n$-orbit of $\gamma$ also has  length $k$. Further any weight $\gamma'\in \Delta_n$ of length $k$ is in the $S_n$-orbit of $\gamma$.  Hence, we can identify $L_n$ with the weight system $\{0,\beta, 2 \beta\ldots, n \beta\}$, where $k \beta$ is identified with the orbit $S_n\cdot \gamma$, where $\sharp \gamma =k$, and $\beta$ is a formal variable of parity $|\alpha_1|$. Now, let $\Delta\subset \Delta_n$ be a $S_n$ invariant weight system, and let $L := \Delta/S_n$. Clearly, $L\subset L_n$  is a subset of $\{0,\beta, 2 \beta\ldots, n \beta\}$.  Note that by construction, the parity of $k\beta$ is equal to the parity of $\gamma\in \Delta_n$ with $\sharp \gamma=k$.

\begin{example}\label{ex different Delta for fixed L}
    Let $\Delta=\{0,\alpha_1, \alpha_2\}\subset \Delta_2$. Then $L=\{0,\beta\}$. For $\Delta_1=\{0,\alpha_1\}$ we also have $L=\{0,\beta\}$. 
\end{example}

Let $\Delta\subset \Delta_n$ be $S_n$ invariant, $L=\Delta/S_n$ be as above, and let $\mcU$ be a graded domain of type $L$ with local coordinates $(x_i, \xi^{k}_{j_k})$, where $i=1,\ldots, n_0$ and $j_k=1,\ldots, n_k$.   We can construct a multiplicity-free domain  $\mcV$ of type $\Delta$ in the following way. We define a multiplicity-free domain $\mcV= (\mcV_0, \mcO_{\mcV})$  with local coordinates $\{y_i,t^{\delta}_{j_{\delta}}\}$, where $\delta\in \Delta\setminus \{0\}$, $i=1,\ldots, n_0$ and $j_{\delta}=1,\ldots, n_{\sharp\delta}$. We assume that the parity of $t^{\delta}_{j_{\delta}} $, where $\sharp \delta=k$, is equal to the parity of $\xi^{k}_{j_k}$. Let us define the following morphism $\pp: \mcV \to \mcU$ by 
\begin{equation}\label{eq covering map pp}
x_i \mapsto y_i ,\quad \xi^{k}_{j_k} \mapsto \sum_{ \sharp\delta =k} t^{\delta}_{j_{k}}, \quad k\beta\in L\setminus \{0\},
\end{equation}
where the sum is taken over all $\delta\in \Delta\setminus \{0\}$ with $\sharp\delta=k$.

\begin{example}
In the case $V=V_{0}\oplus V_1$ and $\Delta=\{0,\alpha_1, \alpha_2\}$, we have  
$$
x_i \mapsto y_i,\quad i=1, \ldots, n_0,\quad \xi^{1}_{j} \mapsto t_j^{\alpha_1} + t_j^{\alpha_2}, \quad j=1, \ldots, n_1,
$$  
where $\dim V_0=n_0$, $\dim V_1=n_1$. 
\end{example}

Let us show that $\pp: \mcV \to \mcU$ satisfies the following universal property in the category of multiplicity-free manifolds of type $\Delta$. Let $\mcM$ be a multiplicity-free manifold of type $\Delta$. Then the structure sheaf $\mcO_{\mcM}$ is $\Z$-graded with respect to the length of $\delta$. Indeed, we can define its $\Z$-grading in the following way
$$
(\mcO_{\mcM})_k = \bigoplus_{\sharp\delta =k} (\mcO_{\mcM})_{\delta}.
$$
Here the sum is taken over $\delta\in \Delta_n$ with $\sharp\delta =k$. 
Let $\psi =(\psi_0,\psi^*): \mcM \to \mcU$ be a morphism of ringed spaces that preserve the $\Z$-gradation as in Section \ref{sec multiplicity-free manifolds}. (Recall that we assume that $\psi_0$ is a usual morphism of manifolds.) Then we can construct the morphism $\Psi: \mcM \to \mcV$ of multiplicity-free manifolds in the following way
$$
\Psi^* ( t^{\delta}_{j_{\delta}}) = \psi^*\big(\xi^{\sharp \delta} _{j_{\sharp \delta}}\big)_{\delta},
$$
where $\psi^*(\xi^{\sharp \delta} _{j_{\sharp \delta}})_{\delta}$ is the homogeneous component of $\psi^*(\xi^{\sharp \delta} _{j_{\sharp \delta}})$ of weight $\delta\in \Delta$.  By construction the following diagram is commutative
\begin{equation}\label{eq univ prop domain general}
	\begin{tikzcd}
	& \mcV \arrow[dr,"\pp"] \\
	\mcM \arrow[ur,"\Psi"] \arrow[rr,"\psi"] && \mcU
	\end{tikzcd},
\end{equation}
since it is commutative on local coordinates. The morphism $\Psi$ is a unique multiplicity-free manifold morphism, making this diagram commutative.

We will call the multiplicity-free domain $\mcV$ a {\it multiplicity-free covering} of a graded domain $\mcU$. The reason for this definition is the following theorem. Let $L$ and $\Delta$ be as above.

\begin{theorem}[Universal properly for a multiplicity-free covering of a graded domain] \label{theor univ property domain}
 For any graded domain $\mcU$ of type $L=\Delta/S_n$ there exists a multiplicity-free manifold $\mcV$ of type $\Delta$ such that for any multiplicity-free manifold $\mcM$ of type $\Delta$ and any morphism $\psi: \mcM \to \mcU$ there exists a unique morphism $\Psi: \mcM \to \mcV$ of multiplicity-free manifolds such that the diagram (\ref{eq univ prop domain general}) is commutative. 
\end{theorem}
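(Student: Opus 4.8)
The plan is to verify that the pair $(\mcV,\pp)$ constructed above, together with the morphism $\Psi$ exhibited in the discussion preceding the statement, satisfies the three requirements: $\Psi$ is a well-defined morphism of multiplicity-free manifolds, the diagram (\ref{eq univ prop domain general}) commutes, and $\Psi$ is the only such morphism. Throughout I would use the coordinate description of morphisms. Recall that $\mcU$ carries the global graded coordinates $(x_i,\xi^k_{j_k})$, that I take $\mcV$ to be the multiplicity-free domain with $\mcV_0=\mcU_0$ and global coordinates $(y_i,t^\delta_{j_\delta})$ with $\w(t^\delta_{j_\delta})=\delta$ and $|t^\delta_{j_\delta}|=|\xi^{\sharp\delta}_{j_{\sharp\delta}}|$, and that $\pp$ is given by (\ref{eq covering map pp}). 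Since morphisms of multiplicity-free manifolds of type $\Delta$ are, by definition, the weight-preserving ones coming from graded manifolds of type $\Delta$ (Section~\ref{sec multiplicity-free manifolds}), gluing the local description (\ref{eq mor of graded domains2}) over a chart atlas of an arbitrary multiplicity-free manifold $\mcM$ of type $\Delta$ shows that a morphism $\mcM\to\mcV$ is the same datum as global sections $\Psi^*(y_i)\in(\mcO_\mcM)_0$ and $\Psi^*(t^\delta_{j_\delta})\in(\mcO_\mcM)_\delta$ subject only to the body condition $(\Psi^*(y_1)(p),\ldots,\Psi^*(y_{n_0})(p))\in\mcV_0$ for all $p$; likewise a morphism $\mcM\to\mcU$ in the sense of Section~\ref{sec multiplicity-free manifolds} is the same datum as global sections $\psi^*(x_i)\in(\mcO_\mcM)_0$ and $\psi^*(\xi^k_{j_k})\in(\mcO_\mcM)_k=\bigoplus_{\sharp\delta=k}(\mcO_\mcM)_\delta$ with the analogous body condition.

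For existence, given $\psi\colon\mcM\to\mcU$ I would set $\Psi^*(y_i):=\psi^*(x_i)$ and $\Psi^*(t^\delta_{j_\delta}):=\big(\psi^*(\xi^{\sharp\delta}_{j_{\sharp\delta}})\big)_\delta$, the weight-$\delta$ homogeneous component. Because $\psi$ preserves the length grading, $\big(\psi^*(\xi^k_{j_k})\big)_\delta$ lies in $(\mcO_\mcM)_\delta$ and is zero unless $\sharp\delta=k$, so the weight constraint holds; the parity constraint holds because taking homogeneous components preserves parity and $|t^\delta_{j_\delta}|=|\xi^{\sharp\delta}_{j_{\sharp\delta}}|$; and the body condition holds with $\Psi_0:=\psi_0$ since $\mcV_0=\mcU_0$. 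Hence $\Psi$ is a bona fide morphism of multiplicity-free manifolds. Then I would check commutativity by evaluating $\Psi^*\circ\pp^*$ on the coordinates of $\mcU$: it sends $x_i$ to $\Psi^*(y_i)=\psi^*(x_i)$ and $\xi^k_{j_k}$ to $\Psi^*\big(\sum_{\sharp\delta=k}t^\delta_{j_k}\big)=\sum_{\sharp\delta=k}\big(\psi^*(\xi^k_{j_k})\big)_\delta=\psi^*(\xi^k_{j_k})$, the last equality because every homogeneous component of $\psi^*(\xi^k_{j_k})$ has length $k$; since a morphism into $\mcU$ is determined by the images of $x_i$ and $\xi^k_{j_k}$, this yields $\pp\circ\Psi=\psi$.

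For uniqueness, suppose $\Psi'\colon\mcM\to\mcV$ also satisfies $\pp\circ\Psi'=\psi$. Then $(\Psi')^*(y_i)=(\Psi')^*(\pp^*(x_i))=\psi^*(x_i)$ and $\sum_{\sharp\delta=k}(\Psi')^*(t^\delta_{j_k})=(\Psi')^*(\pp^*(\xi^k_{j_k}))=\psi^*(\xi^k_{j_k})$; since $\Psi'$ preserves $\Z^n$-weights, $(\Psi')^*(t^\delta_{j_k})$ is homogeneous of weight $\delta$ and therefore, by uniqueness of the weight decomposition, equals $\big(\psi^*(\xi^k_{j_k})\big)_\delta=\Psi^*(t^\delta_{j_k})$, so $\Psi'=\Psi$. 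I expect the whole argument to be essentially bookkeeping; the single load-bearing point is the interplay between the $\Z$-grading by the length $\sharp\delta$ (which the morphism $\psi$ respects) and the finer $\Z^n$-grading by the weight $\delta$ (which morphisms of multiplicity-free manifolds of type $\Delta$ respect) --- extracting weight-$\delta$ components is precisely the operation inverting the sum $\xi^k_{j_k}\mapsto\sum_{\sharp\delta=k}t^\delta_{j_k}$ that defines $\pp$, and this is what drives both existence and uniqueness. A secondary point worth making carefully is that morphisms out of an \emph{arbitrary} multiplicity-free manifold into the domains $\mcU$ and $\mcV$ are still fully described by the images of the target's global coordinates, obtained by gluing (\ref{eq mor of graded domains2}) over an atlas.
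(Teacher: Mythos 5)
Your proposal is correct and follows essentially the same route as the paper: the paper constructs $\Psi$ by the very same formula $\Psi^*(t^{\delta}_{j_{\delta}})=\psi^*(\xi^{\sharp\delta}_{j_{\sharp\delta}})_{\delta}$, checks commutativity of the diagram on local coordinates, and asserts uniqueness, which you justify by the same weight-decomposition argument (any lift preserves the $\Z^n$-grading, so its coordinate images are forced to be the weight-$\delta$ components of $\psi^*(\xi^k_{j_k})$). Your write-up merely makes explicit the bookkeeping (parity, body condition, gluing over an atlas of $\mcM$) that the paper leaves implicit.
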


There is an analogue of this theorem for topological coverings.
As for other coverings, topological or $\Z$-covering, see \cite{Vicovering}, we have the following result.

\begin{theorem}\label{theo covering lift of psi graded to Psi mult free}
	Let $\phi: \mcU\to \mcU'$ be a morphism of graded domains of type $L$. Let $\pp:\mcV\to\mcU$ and $\pp':\mcV'\to\mcU'$ be their multiplicity-free coverings of type $\Delta$ constructed above, respectively. Then there exists a unique morphism of multiplicity-free manifolds $\Phi: \mcV\to \mcV'$ of type $\Delta$ such that the following diagram is commutative:
	\[\begin{tikzcd}
	\mcV \arrow{r}{\exists!\Phi} \arrow[swap]{d}{\pp} & \mcV' \arrow[swap]{d}{\pp'} \\
\mcU \arrow{r}{\phi} & \mcU'
	\end{tikzcd}
	\]
Further, a multiplicity-free covering $\mcV$ of type $\Delta$ of a graded domain $\mcU$ of type $L$ is unique up to isomorphism. 
\end{theorem}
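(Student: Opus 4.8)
The plan is to treat Theorem \ref{theo covering lift of psi graded to Psi mult free} as a formal consequence of the universal property established in Theorem \ref{theor univ property domain}, following the pattern of the topological case. First I would note that the multiplicity-free covering $\pp':\mcV'\to\mcU'$ together with the composite morphism $\phi\circ\pp:\mcV\to\mcU'$ is exactly the data to which the universal property of $\pp'$ applies: here $\mcV$ plays the role of the test multiplicity-free manifold $\mcM$ (it is a multiplicity-free manifold of type $\Delta$), and $\phi\circ\pp$ is a weight-preserving (hence $\Z$-graded) morphism of ringed spaces $\mcV\to\mcU'$ with underlying smooth/holomorphic base map. Applying Theorem \ref{theor univ property domain} yields a unique morphism $\Phi:\mcV\to\mcV'$ of multiplicity-free manifolds of type $\Delta$ with $\pp'\circ\Phi=\phi\circ\pp$, which is precisely the commutativity of the square. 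Uniqueness of $\Phi$ is inherited verbatim from the uniqueness clause of the universal property.

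For the final assertion, that the covering $\mcV$ of $\mcU$ is unique up to isomorphism, I would use the standard abstract-nonsense argument for objects defined by a universal property. Suppose $\pp:\mcV\to\mcU$ and $\tilde\pp:\tilde\mcV\to\mcU$ both satisfy the universal property of Theorem \ref{theor univ property domain} with respect to the same $\mcU$. Feeding $\tilde\mcV$ (with its morphism $\tilde\pp$) into the universal property of $\mcV$ produces a unique $\Psi:\tilde\mcV\to\mcV$ over $\mcU$; symmetrically one gets $\tilde\Psi:\mcV\to\tilde\mcV$ over $\mcU$. Then $\Psi\circ\tilde\Psi:\mcV\to\mcV$ and $\id_{\mcV}$ are both morphisms over $\mcU$ completing the relevant diagram, so by the uniqueness part of the universal property they coincide; likewise $\tilde\Psi\circ\Psi=\id_{\tilde\mcV}$. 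Hence $\Psi$ is an isomorphism of multiplicity-free manifolds over $\mcU$, so the covering is unique up to a unique such isomorphism.

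I would then spell out the one point that is not pure formalism, namely checking that the explicit $\mcV$ constructed in Section \ref{sec multiplicity-free covering of a graded domain} (with coordinates $y_i,t^{\delta}_{j_\delta}$ and projection \eqref{eq covering map pp}) really does satisfy the universal property — but this is exactly the content preceding the statement of Theorem \ref{theor univ property domain}: the lift $\Psi$ is defined on coordinates by $\Psi^*(t^{\delta}_{j_\delta})=\psi^*(\xi^{\sharp\delta}_{j_{\sharp\delta}})_\delta$, the diagram \eqref{eq univ prop domain general} commutes because it commutes on the generators $x_i,\xi^k_{j_k}$, and uniqueness holds because a morphism of multiplicity-free domains is determined by the images of local coordinates. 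So Theorem \ref{theor univ property domain} may be invoked as given.

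The only genuine obstacle, and the step I would be most careful about, is the verification that $\phi\circ\pp$ is an admissible morphism in the sense required to apply the universal property: one must check it is $\Z$-graded, i.e. preserves the length grading. This is immediate because $\pp$ sends a coordinate of weight $k\beta$ to a sum of $t^\delta_{j_\delta}$ with $\sharp\delta=k$ (so it preserves length), $\phi$ is a morphism of graded domains of type $L$ (so it preserves the $L$-grading, equivalently length), and the base maps compose to a smooth/holomorphic map; no multiplicities can be created since all coordinates involved carry multiplicity-free weights, so passing to $\mcO_{\mcD}/\mcI_{\mcD}$ is harmless. Once this is noted, the rest is the formal Yoneda-style argument above, and no further computation is needed.
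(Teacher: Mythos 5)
Your proof is correct and follows the same route as the paper: you obtain $\Phi$ by applying the universal property of Theorem \ref{theor univ property domain} to $\psi=\phi\circ\pp$, and you prove uniqueness up to isomorphism by the standard argument that the two induced morphisms between any two coverings compose to lifts of $\id$, hence to identities. The extra checks you include (that $\phi\circ\pp$ is $\Z$-graded and that the explicit $\mcV$ satisfies the universal property) are consistent with what the paper establishes before the theorem.
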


We will call the morphism $\Phi$ the {\it multiplicity-free lift of $\phi$ of type $\Delta$}, or just a lift of $\phi$.  Note that there exists different $\Delta$ for the same $L$, see Example \ref{ex different Delta for fixed L}.

\begin{proof}

To prove this statement, we use Theorem \ref{theor univ property domain}. In fact, we put $\psi= \phi\circ \pp$. Then $\Phi$ is a multiplicity-free covering of $\psi$, which exists and is unique by Theorem \ref{theor univ property domain}.  Now assume that $\mcU$ have two coverings $\pp:\mcV\to \mcU$ and $\pp':\mcV'\to \mcU$, both satisfying the universal property (\ref{eq univ prop domain general}). Then by above there exist morphisms $\Psi_1: \mcV\to \mcV'$ and $\Psi_2: \mcV'\to \mcV$ such that the following diagrams are commutative 
\[\begin{tikzcd}
\mcV \arrow{r}{\Psi_1} \arrow[swap]{d}{\pp} & \mcV' \arrow{d}{\pp'} \\
\mcU \arrow{r}{\id} & \mcU
\end{tikzcd};
\quad 
\begin{tikzcd}
\mcV \arrow{r}{\Psi_2} \arrow[swap]{d}{\pp'} & \mcV' \arrow{d}{\pp} \\
\mcU \arrow{r}{\id} & \mcU
\end{tikzcd}
\]
This implies that $\Psi_2\circ \Psi_1 : \mcV\to \mcV$ is a lift of $\id$. Hence, $\Psi_2\circ \Psi_1=\id$, since $\id$ is also a lift of $\id$ and this lift is unique.  Similarly, $\Psi_1\circ \Psi_2=\id$. The result follows. 
\end{proof}

Let us give another definition of a multiplicity-free covering of a graded domain. 

\begin{definition}
	 A multiplicity-free covering of type $\Delta$ of a graded domain $\mcU$ of type $L$ is a multiplicity-free manifold $\mcV'$ of type $\Delta$ together with a morphism $\pp': \mcV'\to \mcU$   such that for any  multiplicity-free manifold $\mcM$ of type $\Delta$ and a  morphism $\phi: \mcM\to \mcU$ there exists unique morphism $\Phi: \mcM\to \mcV'$ of multiplicity-free manifolds of type $\Delta$ such that the following diagram is commutative:
		\[
	\begin{tikzcd}
	& \mcV' \arrow[dr,"\pp'"] \\
	\mcM \arrow[ur,"\exists ! \Phi"] \arrow[rr,"\phi"] && \mcU
	\end{tikzcd}
	\]
	In other words a multiplicity-free covering of type $\Delta$ of $\mcU$ is a multiplicity-free manifold $\mcV'$ of type $\Delta$ together with a covering projection $\pp'$ satisfying the universal property (\ref{eq univ prop domain general}).  
\end{definition}

Now we have two different definitions of a multiplicity-free covering of $\mcU$. In fact, they are equivalent. 

\begin{proposition}
	Two definitions of multiplicity-free coverings of type $\Delta$ of $\mcU$ are equivalent. 
\end{proposition}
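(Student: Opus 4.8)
The plan is to observe that the first definition says precisely that there exists \emph{some} multiplicity-free manifold $\mcV$ of type $\Delta$ which satisfies the universal property~(\ref{eq univ prop domain general}) (this is the content of Theorem~\ref{theor univ property domain}), while the second definition asks us to call \emph{any} such pair $(\mcV',\pp')$ a multiplicity-free covering. So the equivalence of the two definitions amounts to two things: first, that the explicit $\mcV$ built in~(\ref{eq covering map pp}) together with the projection $\pp$ really does satisfy the universal property demanded in the second definition — but this is exactly the commutative diagram~(\ref{eq univ prop domain general}) together with the uniqueness clause, which were verified right after the construction; and second, that \emph{every} object satisfying the second definition is isomorphic (over $\mcU$) to that explicit $\mcV$, so that the two definitions carve out the same class of objects up to the only reasonable notion of sameness.

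First I would record that the explicit $\mcV$ of Section~\ref{sec multiplicity-free covering of a graded domain} satisfies the second definition: given a multiplicity-free manifold $\mcM$ of type $\Delta$ and a morphism $\phi:\mcM\to\mcU$, set $\Psi$ to be the morphism with $\Psi^*(t^{\delta}_{j_{\delta}}) = \phi^*(\xi^{\sharp\delta}_{j_{\sharp\delta}})_{\delta}$ as above; commutativity of the triangle holds on local coordinates, hence everywhere, and any other morphism making the triangle commute must have the same effect on the generators $t^{\delta}_{j_{\delta}}$ after projecting, which forces it to equal $\Psi$. Thus $(\mcV,\pp)$ is a covering in the sense of the second definition, so the first notion produces objects of the second.

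Next I would run the standard Yoneda-style uniqueness argument, which is in fact already carried out inside the proof of Theorem~\ref{theo covering lift of psi graded to Psi mult free}: if $(\mcV',\pp')$ is \emph{any} pair satisfying the universal property of the second definition, then applying that property to $\pp:\mcV\to\mcU$ and, conversely, applying the property of $(\mcV,\pp)$ to $\pp':\mcV'\to\mcU$, yields morphisms $\Psi_1:\mcV\to\mcV'$ and $\Psi_2:\mcV'\to\mcV$ over $\mcU$; then $\Psi_2\circ\Psi_1$ and $\id_{\mcV}$ are both morphisms $\mcV\to\mcV$ over $\id_{\mcU}$, so by the uniqueness clause they coincide, and symmetrically $\Psi_1\circ\Psi_2=\id_{\mcV'}$. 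Hence $\mcV'\cong\mcV$ over $\mcU$. Together with the previous paragraph this shows the two definitions single out the same objects up to canonical isomorphism over $\mcU$, which is the assertion.

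I do not expect a genuine obstacle here; the only point requiring any care is bookkeeping about what ``equivalent'' should mean — namely that an object of one definition gives rise to an object of the other and that the comparison morphisms are isomorphisms over $\mcU$ — and making sure the uniqueness clauses are invoked with the identity on the base, exactly as in the proof of Theorem~\ref{theo covering lift of psi graded to Psi mult free}. If anything is mildly delicate, it is checking that a morphism of multiplicity-free manifolds over $\mcU$ is determined by its values on the coordinates $t^{\delta}_{j_{\delta}}$ — but this is immediate from the fact (used repeatedly in Section~\ref{sec multiplicity-free manifolds}) that a morphism of graded, hence of multiplicity-free, domains is determined by the images of local coordinates, together with the compatibility $\pp\circ(\,\cdot\,) = \phi$ pinning down the zero-weight coordinates.
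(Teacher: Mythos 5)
Your proposal is correct and follows the same route as the paper: it combines the earlier verification that the explicitly constructed $\mcV$ satisfies the universal property (\ref{eq univ prop domain general}) with the uniqueness-up-to-isomorphism argument from the proof of Theorem \ref{theo covering lift of psi graded to Psi mult free}. You simply spell out these two ingredients in more detail than the paper does.
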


\begin{proof}
	We showed that the multiplicity-free covering $\mcV$ constructed above satisfies the universal property (\ref{eq univ prop domain general}).  We saw, see the proof of  Theorem \ref{theo covering lift of psi graded to Psi mult free},  that any object $\mcV$ satisfying the universal property is unique up to isomorphism. The result follows. 
\end{proof}

Let us show that the correspondence $\psi \mapsto \Psi$ is functorial.

\begin{proposition}\label{prop psi to Psi is a functor domains}
	Let $\psi_{12}:\mcU_1\to \mcU_2$ and $\psi_{23}:\mcU_2\to \mcU_3$ be two morphisms of graded domains of type $L$. Denote by $\Psi_{ij}: \mcU_i\to \mcU_j$ the multiplicity-free lift of $\psi_{ij}$ of type $\Delta$. Then the multiplicity-free lift of $\psi_{23} \circ \psi_{12}$ is equal to $\Psi_{23} \circ \Psi_{12}$. In other words, the correspondents $\mcU\mapsto \mcV$, $\psi \mapsto \Psi$ is a functor from the category of graded domains of type $L$ to the category of multiplicity-free manifolds of type $\Delta$. 
\end{proposition}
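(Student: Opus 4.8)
The plan is to deduce functoriality entirely from the uniqueness clause of Theorem~\ref{theor univ property domain}, in exactly the same way that one proves functoriality of lifts for topological coverings. Fix multiplicity-free coverings $\pp_i\colon \mcV_i\to\mcU_i$ of type $\Delta$ for $i=1,2,3$, and let $\Psi_{12}\colon\mcV_1\to\mcV_2$ and $\Psi_{23}\colon\mcV_2\to\mcV_3$ be the multiplicity-free lifts of $\psi_{12}$ and $\psi_{23}$, so that by construction $\pp_2\circ\Psi_{12}=\psi_{12}\circ\pp_1$ and $\pp_3\circ\Psi_{23}=\psi_{23}\circ\pp_2$ (the commutative squares of Theorem~\ref{theo covering lift of psi graded to Psi mult free}). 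The goal is to show that $\Psi_{23}\circ\Psi_{12}$ is \emph{the} multiplicity-free lift of $\psi_{23}\circ\psi_{12}$, and that the lift of an identity is an identity.

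First I would note that $\Psi_{23}\circ\Psi_{12}\colon\mcV_1\to\mcV_3$ is again a morphism of multiplicity-free manifolds of type $\Delta$, being a composite of such; composition is well defined in this category by Section~\ref{sec multiplicity-free manifolds} (recall $\widehat\Phi_2\circ\widehat\Phi_1=\widehat{\Phi_2\circ\Phi_1}$). Next I would run the diagram chase
\[
\pp_3\circ(\Psi_{23}\circ\Psi_{12})
=(\pp_3\circ\Psi_{23})\circ\Psi_{12}
=(\psi_{23}\circ\pp_2)\circ\Psi_{12}
=\psi_{23}\circ(\pp_2\circ\Psi_{12})
=\psi_{23}\circ\psi_{12}\circ\pp_1 ,
\]
which shows that $\Psi_{23}\circ\Psi_{12}$ sits in the commutative square of Theorem~\ref{theo covering lift of psi graded to Psi mult free} associated with the morphism $\psi_{23}\circ\psi_{12}\colon\mcU_1\to\mcU_3$.

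Finally I would invoke uniqueness. Apply Theorem~\ref{theor univ property domain} with $\mcM=\mcV_1$ and test morphism $\psi:=\psi_{23}\circ\psi_{12}\circ\pp_1\colon\mcV_1\to\mcU_3$: there is exactly one morphism $\mcV_1\to\mcV_3$ of multiplicity-free manifolds making the triangle (\ref{eq univ prop domain general}) commute for this $\psi$, and by the recipe $\phi\mapsto\Phi$ used in the proof of Theorem~\ref{theo covering lift of psi graded to Psi mult free} (with $\phi=\psi_{23}\circ\psi_{12}$) that morphism is precisely the multiplicity-free lift of $\psi_{23}\circ\psi_{12}$. The previous step shows $\Psi_{23}\circ\Psi_{12}$ is such a morphism, hence $\Psi_{23}\circ\Psi_{12}$ equals the lift of $\psi_{23}\circ\psi_{12}$. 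The same uniqueness, applied to $\id_{\mcU}$, shows its lift is $\id_{\mcV}$, since $\id_{\mcV}$ trivially makes the square commute. This proves that $\mcU\mapsto\mcV$, $\psi\mapsto\Psi$ is a functor.

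I expect no genuine obstacle here; the only point that needs care is applying the uniqueness statement to the correct test object, namely $\mcM=\mcV_1$ rather than $\mcU_1$, and within the category of multiplicity-free manifolds of type $\Delta$, so that the ``unique $\Phi$'' of (\ref{eq univ prop domain general}) is literally the composite under consideration. Alternatively one could verify $\Psi_{23}\circ\Psi_{12}=\bigl(\text{lift of }\psi_{23}\circ\psi_{12}\bigr)$ directly on local coordinates, using (\ref{eq covering map pp}) together with $\Psi^*(t^\delta_{j_\delta})=\psi^*(\xi^{\sharp\delta}_{j_{\sharp\delta}})_\delta$ and the fact that taking the weight-$\delta$ homogeneous component commutes with the relevant substitutions, but the uniqueness argument sidesteps this bookkeeping.
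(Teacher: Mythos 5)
Your argument is correct and is essentially the paper's own proof: the paper's one-line proof is exactly the observation that both $\Psi_{23}\circ\Psi_{12}$ and the lift of $\psi_{23}\circ\psi_{12}$ make the diagram (\ref{eq univ prop domain general}) commutative, so they coincide by the uniqueness clause of Theorem \ref{theor univ property domain}; your version just spells out the diagram chase and the choice of test object $\mcM=\mcV_1$, and adds the (correct) remark that the lift of the identity is the identity.
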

\begin{proof}
    A lift of $\psi_{23} \circ \psi_{12}$ and $\Psi_{23} \circ \Psi_{12}$ both makes the diagram (\ref{eq univ prop domain general}) commutative. 
\end{proof}

\begin{remark}\label{rem any S-invariant mult free is c covering} 
Let $\mcV$ be a multiplicity-free domain of type $\Delta$, where $\Delta\subset \Delta_n$ is $S_n$-invariant. Denote by $\{y_i,t_{j_{\delta}}^{\delta}\}_{\delta\in \Delta\setminus \{0\}}$, where $i=1,\ldots, n_0$ and $j_k = 1,\ldots, n_{\delta}$, a system of homogeneous coordinates of $\mcV$. Assume that the following action of $S_n$ is defined in $\mcV$
$$
s\cdot t_{j_{\delta}}^{\delta} = t_{j_{s\cdot \delta}}^{s\cdot \delta}. 
$$
This implies that for $\delta, \delta'$ with $\sharp\delta = \sharp\delta'$ we have a bijection between the local coordinates $t_{j_{\delta}}^{\delta}$ and $t_{j_{\delta'}}^{\delta'}$, and the coordinates $t_{j_{\delta}}^{\delta}$, $t_{j_{\delta'}}^{\delta'}$ have the same parity. (The element $s\in S_n$ can be regarded as a $\Z$-graded morphism of multiplicity-free domains $\mcV$.)

This multiplicity-free domain $\mcV$ can be regarded as a multiplicity-free covering of a graded domain $\mcU$ of type $L = \Delta/S_n$.  In fact, we assume that $\mcU$ has the same base space as $\mcV$.    To the system of local coordinates $\{y_i,t_{j_{\delta}}^{\delta}\}_{\delta\in \Delta\setminus \{0\}}$ we assign  the following system of local graded coordinates of $\mcU$: 
$$
\{x_i,\xi^k_{j_k}\,|\,\, i=1,\ldots, n_0,\,\,\, j_k = 1,\ldots, n_{\delta}, \,\, \sharp\delta =k\}_{k\in L\setminus \{0\}},
$$
where $|\xi_{j_k}^k| = |t_{j_{\delta}}^{\delta}|$ with $\sharp\delta =k$. The covering map $\pp: \mcV\to \mcU$ is defined by Formulas  (\ref{eq covering map pp}). 
 Clearly $\mcV$ is a multiplicity-free covering of type $\Delta$ of a graded domain $\mcU$ of type $L$. 
\end{remark}

\subsection{Multiplicity-free covering of a graded manifold}\label{sec multiplicity-free covering of a graded manifold}

Let $\Delta\subset \Delta_n$ be $S_n$-invariant, and  let $L=\Delta/S_n$ be as in Section \ref{sec multiplicity-free covering of a graded domain}. 
Using Theorem \ref{theo covering lift of psi graded to Psi mult free},  we will construct a multiplicity-free covering $\mcP$ of type $\Delta$ for any graded manifold $\mcN$ of type $L$. Let us choose an atlas $\{\mcU_i\}$ of $\mcN$ and denote by $\psi_{ij}:\mcU_j\to \mcU_i$ the transition functions between graded domains. By definition, these transition functions satisfy the following cocycle condition
$$
\psi_{ij} \circ \psi_{jk} \circ \psi_{ki} = \id\quad \text{in}\quad \mcU_i\cap \mcU_j\cap \mcU_k. 
$$
 Denote by $\mcV_i$ the multiplicity-free covering of type $\Delta$ of $\mcU_i$ constructed in Section \ref{sec multiplicity-free covering of a graded domain} and by $\Psi_{ij}$ the multiplicity-free lift of $\psi_{ij}$, see Theorem \ref{theo covering lift of psi graded to Psi mult free}. By Proposition \ref{prop psi to Psi is a functor domains}, the morphisms $\{\Psi_{ij}\}$ also satisfy the cocycle condition
$$
\Psi_{ij} \circ \Psi_{jk} \circ \Psi_{ki} = \id.
$$
Therefore the data $\{ \mcV_i\}$ and $\{ \Psi_{ij}\}$ define a multiplicity-free manifold, which we denote by $\mcP$. 

\begin{remark}
    Note that $\mcP = \widehat \mcD$ for some graded manifold $\mcD$ of type $\Delta$. Clearly, $\mcV_i$ corresponds to graded domains of type $\Delta$.  Furthermore, the multiplicity-free morphism $\Psi_{ji}$ corresponds to a graded morphism of type $\Delta$, which is given in local coordinates by the same formulas as $\Psi_{ji}$. 
\end{remark}

Denote by $\pp_i: \mcV_i\to \mcU_i$ the covering map defined  for any $i$. By construction of the morphisms $\psi_{ij}$ and $\Psi_{ij}$ the following diagram is commutative
$$
\begin{tikzcd}
\mcV_j \arrow{r}{\Psi_{ij}} \arrow[swap]{d}{\pp_j} & \mcV_i \arrow{d}{\pp_i} \\
\mcU_j \arrow{r}{\psi_{ij}} & \mcU_i
\end{tikzcd}.
$$
This means that we can define a map $\pp: \mcP\to \mcN$ such that in any chart we have $\pp|_{\mcU_i} = \pp_i$.  Since the diagram above is commutative, the morphisms $\pp_i$ are compatible, hence the global morphism $\pp$ is well-defined. 

\begin{definition}
The multiplicity-free manifold $\mcP$ of type $\Delta$ constructed above for a fixed graded manifold $\mcN$ of type $L$ together with the morphism $\pp: \mcP\to \mcN$ is called a multiplicity-free covering of type $\Delta$ of $\mcN$. 
\end{definition}

Now we show that $\pp: \mcP\to \mcN$ satisfies the universal property. 

\begin{theorem}[Universal properly for a multiplicity-free covering of a graded manifold]\label{theor univ pro graded and mult free manifold}
	The multiplicity-free covering $\pp: \mcP\to \mcM$ of type $\Delta$ of a graded manifold $\mcN$ of type $L$ together with the morphism $\pp$ satisfies the following universal property. For any multiplicity-free manifold $\mcM$ of type $\Delta$ and any morphism $\phi:\mcM\to \mcN$ there exists a unique morphism $\Phi: \mcM \to \mcP$ of multiplicity-free manifolds of type $\Delta$ such that the following diagram is commutative
\[
	\begin{tikzcd}
	& \mcP \arrow[dr,"\pp"] \\
	\mcM \arrow[ur,"\exists!\Phi"] \arrow[rr,"\phi"] && \mcN
	\end{tikzcd}.
	\]
\end{theorem}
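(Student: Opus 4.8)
The plan is to construct $\Phi$ locally over each chart using the universal property for graded domains (Theorem \ref{theor univ property domain}), then glue. First I would fix an atlas $\{\mcU_i\}$ of $\mcN$ with transition maps $\psi_{ij}$, and let $\mcV_i$, $\Psi_{ij}$, $\pp_i$ be the data defining $\mcP$ as in the construction above; set $\mcM_i := \phi_0^{-1}((\mcU_i)_0)$ and let $\mcM|_{\mcM_i}$ be the corresponding open multiplicity-free submanifold of $\mcM$. The restriction $\phi|_{\mcM_i}: \mcM|_{\mcM_i} \to \mcU_i$ is a morphism into a graded domain (a multiplicity-free manifold of type $\Delta$ into a graded domain of type $L$), so Theorem \ref{theor univ property domain} yields a unique morphism $\Phi_i : \mcM|_{\mcM_i} \to \mcV_i$ with $\pp_i\circ \Phi_i = \phi|_{\mcM_i}$.

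The key step is to check that the $\Phi_i$ glue, i.e.\ that on $\mcM_i \cap \mcM_j$ one has $\Psi_{ij}\circ \Phi_j = \Phi_i$ (with the appropriate restrictions). For this I would observe that both sides are morphisms $\mcM|_{\mcM_i\cap\mcM_j} \to \mcV_i$, and compute $\pp_i \circ (\Psi_{ij}\circ \Phi_j) = (\pp_i\circ\Psi_{ij})\circ\Phi_j = (\psi_{ij}\circ \pp_j)\circ \Phi_j = \psi_{ij}\circ \phi|_{\mcM_j} = \phi|_{\mcM_i}$, using the commuting square relating $\Psi_{ij}$, $\pp_i$, $\pp_j$, $\psi_{ij}$ from the construction, and the fact that $\psi_{ij}$ is the transition map of $\mcN$ so $\psi_{ij}\circ\phi = \phi$ over the overlap. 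Since $\pp_i\circ\Phi_i = \phi|_{\mcM_i}$ as well, the uniqueness clause of Theorem \ref{theor univ property domain} forces $\Psi_{ij}\circ\Phi_j = \Phi_i$. Hence the $\Phi_i$ are compatible with the gluing cocycle $\{\Psi_{ij}\}$ defining $\mcP$, so they assemble into a global morphism $\Phi: \mcM\to \mcP$ of multiplicity-free manifolds of type $\Delta$, and $\pp\circ \Phi = \phi$ because this holds chart by chart.

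For uniqueness of the global $\Phi$: if $\Phi, \Phi'$ both satisfy $\pp\circ\Phi = \pp\circ\Phi' = \phi$, then over each $\mcU_i$ we get two lifts $\Phi|_{\mcM_i}, \Phi'|_{\mcM_i}$ of $\phi|_{\mcM_i}$ through $\pp_i$, hence equal by the uniqueness in Theorem \ref{theor univ property domain}; since they agree on every chart of an atlas, $\Phi = \Phi'$ globally.

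The main obstacle I expect is purely bookkeeping: making sure the local universal property is being applied to the right objects — $\mcM|_{\mcM_i}$ is a multiplicity-free manifold of type $\Delta$ (not a domain), but the target $\mcU_i$ is a graded \emph{domain} of type $L$, which is exactly the generality in which Theorem \ref{theor univ property domain} is stated (it allows an arbitrary multiplicity-free manifold $\mcM$ as source) — and keeping the various restrictions of $\psi_{ij}$, $\Psi_{ij}$, $\phi$ to triple and double overlaps consistent. No genuinely new idea beyond Theorem \ref{theor univ property domain} and the cocycle compatibility of the $\Psi_{ij}$ (Proposition \ref{prop psi to Psi is a functor domains}) should be needed; this is the standard "universal object glues from local universal objects" argument, transported to the graded/multiplicity-free setting.
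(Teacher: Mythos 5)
Your proposal is correct and follows essentially the same route as the paper: apply the domain-level universal property (Theorem \ref{theor univ property domain}) chart by chart and use its uniqueness clause both to glue the local lifts along the cocycle $\{\Psi_{ij}\}$ and to obtain global uniqueness. The only difference is bookkeeping — you pull back the atlas of $\mcN$ via $\phi_0$, while the paper additionally runs over an atlas of $\mcM$ — which does not change the substance of the argument.
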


\begin{proof}

Let us take an atlas $\{\mcW_j\}$ of $\mcM$ and the atlases $\{\mcU_i\}$ and $\{\mcV_i\}$ of $\mcN$ and $\mcP$, respectively, as above.  Let us show that for any $j$ there exists a unique morphism $\Phi_j: \mcW_j \to  \mcP$ of multiplicity-free manifolds of type $\Delta$ such that $\phi|_{\mcW_j} = \pp\circ\Phi_j  $. By Theorem \ref{theor univ property domain} for any $i$ there exists a unique morphism $\Phi_{ji}:  \mcW_j \to  \mcV_i$ of multiplicity-free manifolds such that $\phi|_{\mcW_j} = \pp\circ \Phi_{ji} $. (This composition is defined on an open subset of $(\mcW_j)_0$.) In other words, the following diagram is commutative
\[
	\begin{tikzcd}
	& \mcV_i \arrow[dr,"\pp"] \\
	\mcW_j \arrow[ur,"\exists!\Phi_{ji}"] \arrow[rr,"\phi|_{\mcW_j}"] && \mcU_i
	\end{tikzcd}.
	\]
This means that the diagram 
\[
	\begin{tikzcd}
	& \mcV_i\cap \mcV_{i'} \arrow[dr,"\pp"] \\
	\mcW_j \arrow[ur,"\Phi_{ji}=\Phi_{ji'}"] \arrow[rr,"\phi|_{\mcW_j}"] && \mcU_i \cap \mcU_{i'} 
	\end{tikzcd}.
	\]
is commutative for $\Phi_{ji}$ and for $\Phi_{ji'}$. Since the multiplicity-free lift is unique,  we have $\Phi_{ji}= \Phi_{ji'}$ in an open set, where they both are defined. Hence the required multiplicity-free morphism $\Phi_{j}: \mcW_j\to \mcP$, given by the data $\Phi_{ji}$, is well-defined and it is unique.

Further, the morphisms $ \Phi_{j}: \mcW_j\to \mcP$ and $\Phi_{j'}: \mcW_{j'}\to \mcP$ both make the following diagram commutative
\[
	\begin{tikzcd}
	& \mcP \arrow[dr,"\pp"] \\
	\mcW_j \cap \mcW_{j'}\arrow[ur, "\Phi_{j}= \Phi_{j'}"] \arrow[rr,"\phi|_{\mcW_j\cap \mcW_{j'}}"] && \mcN
	\end{tikzcd}.
	\]
Since the multiplicity-free lift for a domain is unique, we have $ \Phi_{j}|_{\mcW_j \cap \mcW_{j'}} = \Phi_{j'}|_{\mcW_j \cap \mcW_{j'}}$. We put $\Phi|_{\mcW_j} = \Phi_j$. Clearly, $\Phi$ is the required morphism. It is unique since it is unique locally. The proof is complete. 
\end{proof} 
 
As a corollary of Theorem \ref{theor univ pro graded and mult free manifold}, see also Proposition \ref{prop psi to Psi is a functor domains}, we obtain the following theorem.

\begin{theorem}\label{theo covering lift of psi graded to Psi mult free general}
	Let $\phi: \mcN\to \mcN'$ be a morphism of graded manifolds of type $L$ and let $\pp:\mcP\to\mcN$ and $\pp':\mcP'\to\mcN'$ be their multiplicity-free coverings of type $\Delta$ constructed above, respectively. Then there exists a unique morphism of multiplicity-free manifolds $\Phi: \mcP\to \mcP'$ of type $\Delta$ such that the following diagram is commutative:\[\begin{tikzcd}
	\mcP \arrow{r}{\exists!\Phi} \arrow[swap]{d}{\pp} & \mcP' \arrow[swap]{d}{\pp'} \\
\mcN \arrow{r}{\phi} & \mcN'
	\end{tikzcd}
	\]
Further, a multiplicity-free covering $\mcP$ of type $\Delta$ of a graded manifold $\mcN$ of type $L$ is unique up to isomorphism. The correspondents $\phi \mapsto \Phi$ is a functor from the category of graded domains of type $L$ to the category of multiplicity-free manifolds of type $\Delta$. 
\end{theorem}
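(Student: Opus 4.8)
The plan is to obtain the entire statement as a formal consequence of the universal property in Theorem \ref{theor univ pro graded and mult free manifold}, mirroring the way Theorem \ref{theo covering lift of psi graded to Psi mult free} was derived from Theorem \ref{theor univ property domain} at the level of domains.

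For the existence and uniqueness of the lift $\Phi$, I would set $\psi := \phi \circ \pp : \mcP \to \mcN'$. Because $\pp$ is a morphism from a multiplicity-free manifold of type $\Delta$ to a graded manifold of type $L$ and $\phi$ is a morphism of graded manifolds of type $L$, the composite $\psi$ is again a morphism of the type considered in Section \ref{sec multiplicity-free manifolds}, i.e.\ a $\Z$-graded morphism of ringed spaces whose base map is smooth or holomorphic. Applying Theorem \ref{theor univ pro graded and mult free manifold} to the covering $\pp' : \mcP' \to \mcN'$ and to the morphism $\psi$ produces a unique morphism $\Phi : \mcP \to \mcP'$ of multiplicity-free manifolds of type $\Delta$ with $\pp' \circ \Phi = \psi = \phi \circ \pp$; this is exactly the commutativity of the square. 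Any morphism making that square commute is, by definition, a lift of $\psi$ along $\pp'$, hence coincides with $\Phi$ by the uniqueness clause of Theorem \ref{theor univ pro graded and mult free manifold}.

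Next I would prove that the covering is unique up to isomorphism by the standard universal-property argument, copying the proof of Theorem \ref{theo covering lift of psi graded to Psi mult free}. If $\pp : \mcP \to \mcN$ and $\pp' : \mcP' \to \mcN$ both satisfy the universal property, then taking $\phi = \id_{\mcN}$ in the previous step (and its mirror) yields lifts $\Psi_1 : \mcP \to \mcP'$ and $\Psi_2 : \mcP' \to \mcP$ of $\id_{\mcN}$. The composites $\Psi_2 \circ \Psi_1$ and $\id_{\mcP}$ are both lifts of $\id_{\mcN}$ through $\pp$, hence equal; likewise $\Psi_1 \circ \Psi_2 = \id_{\mcP'}$, so $\mcP \cong \mcP'$. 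For functoriality, given $\phi_{12} : \mcN_1 \to \mcN_2$ and $\phi_{23} : \mcN_2 \to \mcN_3$ with lifts $\Phi_{12}, \Phi_{23}$, I would observe that $\pp_3 \circ (\Phi_{23}\circ\Phi_{12}) = \phi_{23}\circ \pp_2 \circ \Phi_{12} = (\phi_{23}\circ\phi_{12}) \circ \pp_1$, so $\Phi_{23}\circ\Phi_{12}$ is a lift of $\phi_{23}\circ\phi_{12}$ and therefore equals that lift by uniqueness; since $\id_{\mcP}$ is a lift of $\id_{\mcN}$, the assignment $\mcN \mapsto \mcP$, $\phi \mapsto \Phi$ is a functor. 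Chart by chart this is also exactly Proposition \ref{prop psi to Psi is a functor domains}.

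I do not expect a genuine obstacle here — the content is entirely contained in Theorem \ref{theor univ pro graded and mult free manifold}. The only points requiring care are verifying that $\phi \circ \pp$ is a legitimate morphism to which Theorem \ref{theor univ pro graded and mult free manifold} applies (that composing a multiplicity-free morphism with a morphism of graded manifolds stays within the admissible class of $\Z$-graded ringed-space morphisms with smooth or holomorphic base), and keeping all the diagram chases over a common atlas so that the local uniqueness in Theorem \ref{theor univ pro graded and mult free manifold} propagates to a global statement.
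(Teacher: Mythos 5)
Your proposal is correct and is essentially the paper's own argument: the paper proves this theorem by repeating the proof of Theorem \ref{theo covering lift of psi graded to Psi mult free} with the manifold-level universal property (Theorem \ref{theor univ pro graded and mult free manifold}) in place of the domain-level one, i.e.\ setting $\psi=\phi\circ\pp$, invoking existence and uniqueness of the lift, and deducing uniqueness of the covering and functoriality exactly as you do (cf.\ Proposition \ref{prop psi to Psi is a functor domains}).
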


\begin{proof}
    Using the same argument as in the proof of Theorem \ref{theo covering lift of psi graded to Psi mult free}, see also Proposition \ref{prop psi to Psi is a functor domains}. 
\end{proof}

\begin{definition}
    We call the morphism $\Phi$ the {\it multiplicity-free lift of $\phi$ of type $\Delta$}, or just a lift of $\phi$ if the type is clear from the context.  
\end{definition}
Note that there exists different $\Delta$ for the same $L$, see Example \ref{ex different Delta for fixed L}.

\begin{remark}
    In fact, we showed that any object that satisfies the universal property of Theorem \ref{theor univ pro graded and mult free manifold}, is unique up to isomorphism. 
\end{remark}

\subsection{Coverings, homomorphisms and fundamental groups}
In Introduction we mentioned that the multiplicity-free covering corresponds to the following homomorphism 
\begin{equation}
    \chi: \Z^n \to \Z, \quad (k_1, \ldots, k_n) \longmapsto k_1+ \cdots+ k_n.
\end{equation}
In more detail, above, we constructed the multiplicity-free covering $\mcP$ of type $\Delta$ for any graded manifold $\mcN$ of type $L=\Delta/S_n$. The covering projection $\pp: \mcP\to \mcN$ is locally given by Formulas (\ref{eq covering map pp}). We can rewrite these formulas in the following way
$$
x_i \mapsto y_i ,\quad \pp^*(\xi^{k}_{j_k}) =  \sum_{ \delta \in\chi^{-1}(k) \cap \Delta} t^{\delta}_{j_k},\quad k\beta\in L\setminus \{0\}.
$$
Summing up, locally the covering projection is defined by the corresponding homomorphism and the type $\Delta$ of the covering. The covering constructed in \cite{Vicovering} corresponds to the homomorphism $\Z\to \Z_2$, $n \mapsto \bar n$, and the type $\Z^{\geq 0}$, while the covering constructed in \cite{FernandoVish}  corresponds to a homomorphism $H\to \Z_2$, where $H$ is any finite abelian group, and the type $H$. 

Furthermore, for the homomorphism $\chi$ and the type $\Delta$ we can define the deck transformation group or the covering transformation group in the following way.

\begin{definition}\label{def Deck group}
    The deck transformation group or the covering transformation group of $\chi$ of type $\Delta$  is the following group
    $$
\Deck(\chi, \Delta)= \{ A \in \operatorname{Aut} (\Z^n) \,\,  | \,\,  \chi\circ A = \chi, \,\,  A  (\Delta) = \Delta\}.
 $$
\end{definition}

Clearly, this definition is applicable to any homomorphism $\phi$ and any type $\Delta$. 

\begin{proposition}\label{prop Dech group}
    We have $\Deck(\chi, \Delta) \simeq S_n$. 
\end{proposition}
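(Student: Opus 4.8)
The plan is to exhibit an explicit isomorphism $S_n\to\Deck(\chi,\Delta)$ and check it is bijective. First I would define the map: each permutation $s\in S_n$ acts on $\Z^n$ by permuting the standard basis vectors $e_1,\dots,e_n$, i.e. $A_s(e_i)=e_{s(i)}$, extended linearly. I would note immediately that $\chi(A_s(k_1,\dots,k_n))=\chi(k_{s^{-1}(1)},\dots,k_{s^{-1}(n)})=\sum_i k_i=\chi(k_1,\dots,k_n)$, so $\chi\circ A_s=\chi$; and since the $S_n$-action on $\Delta_n$ defined in Section \ref{sec multiplicity-free covering of a graded domain} is precisely the restriction of this linear action to the multiplicity-free vectors, $S_n$-invariance of $\Delta$ gives $A_s(\Delta)=\Delta$. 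Hence $s\mapsto A_s$ lands in $\Deck(\chi,\Delta)$, and it is clearly an injective group homomorphism (distinct permutations give distinct actions on the $e_i$, since $e_i\in\Delta_n$ and we may assume $\Delta$ contains all the $\alpha_i$, or argue directly on $\Z^n$).

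The substance is surjectivity. Let $A\in\Aut(\Z^n)$ with $\chi\circ A=\chi$. Write $A(e_j)=\sum_i a_{ij}e_i$ with $a_{ij}\in\Z$ and $\det(a_{ij})=\pm1$. The condition $\chi\circ A=\chi$ says $\sum_i a_{ij}=1$ for every column $j$. I would then bring in the constraint $A(\Delta)=\Delta$. Taking $\Delta$ as large as the hypothesis permits is unnecessary; the key point is that $\alpha_j=e_j\in\Delta$ for each $j$ (if $\Delta$ were $S_n$-invariant but missed some $\alpha_j$ it would miss all of them by transitivity, and then $\Delta=\{0\}$, in which case both $\Deck$ and $S_n$ collapse appropriately — I would dispose of this degenerate case separately, or simply restrict attention to the nondegenerate $\Delta$ relevant to coverings). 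So $A(e_j)\in\Delta\subset\Delta_n$, meaning every entry $a_{ij}\in\{0,1\}$ and, combined with $\sum_i a_{ij}=1$, exactly one entry in each column equals $1$. Thus $A$ is a $0$-$1$ matrix with exactly one $1$ per column; since $\det A=\pm1$ (in fact a single $1$ per column and integrality force it to be a permutation matrix once we also know columns are distinct, which follows from invertibility), $A$ is the permutation matrix of some $s\in S_n$, i.e. $A=A_s$.

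I would then assemble these observations: the homomorphism $s\mapsto A_s$ is injective (Step 1) and surjective (Step 2), hence an isomorphism $S_n\xrightarrow{\sim}\Deck(\chi,\Delta)$. One should also verify it is a homomorphism with the right variance — $A_s\circ A_t=A_{st}$ — which is the standard fact that permutation matrices represent $S_n$; this is routine and I would state it without belaboring it.

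The main obstacle, such as it is, is the surjectivity argument, and within it the only genuine subtlety is making precise the interaction between the arithmetic condition ($\sum_i a_{ij}=1$, from $\chi\circ A=\chi$) and the combinatorial condition ($A(e_j)$ multiplicity-free, from $A(\Delta)=\Delta$ together with $e_j\in\Delta$). Once both are in hand, the conclusion that a $0$-$1$ square matrix with one $1$ per column and nonzero determinant is a permutation matrix is elementary linear algebra over $\Z$. A secondary bookkeeping point is handling the degenerate weight systems $\Delta$ for which the $\alpha_i$ are not all present; I would simply remark that for the $\Delta$ arising as types of coverings one has all $\alpha_i\in\Delta$, or treat $\Delta=\{0\}$ as a trivial edge case where the statement holds vacuously.
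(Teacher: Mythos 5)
Your argument is essentially the paper's: from $\chi\circ A=\chi$ you get that each column of $A$ sums to $1$, from $A(\Delta)=\Delta$ applied to the generators $\alpha_j\in\Delta$ you get $0$--$1$ entries, hence exactly one $1$ per column, and invertibility then forces $A$ to be a permutation matrix; the inclusion $S_n\subset\Deck(\chi,\Delta)$ is the easy direction in both proofs. One caveat on your side remark about degenerate weight systems: it is not true that an $S_n$-invariant $\Delta$ missing some $\alpha_j$ must equal $\{0\}$ --- for instance $\Delta=\{0,\alpha_1+\cdots+\alpha_n\}$ is $S_n$-invariant, contains no $\alpha_j$, and for $n\geq 3$ its stabilizer $\Deck(\chi,\Delta)$ inside $\GL_n(\Z)$ is strictly larger than $S_n$ (it contains a congruence-subgroup's worth of automorphisms of $\ker\chi$). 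So the correct way to dispose of this point is your second option, namely to assume all $\alpha_i\in\Delta$, which is exactly what the paper's proof does implicitly when it writes ``take a generator $\alpha_i\in\Delta$''.
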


\begin{proof}
    First, we have $ \operatorname{Aut} (\Z^n) = \GL_n(\Z)$. Secondly, let us take a generator $\alpha_i\in \Delta$. Then we have
    \begin{align*}
        1\beta= \chi(\alpha_i) =  \chi\circ A (\alpha_i) = \chi (\sum_j a_{ij} \alpha_j) = \sum_j a_{ij} \beta. 
    \end{align*}
Since $ A  (\Delta) = \Delta$, we have $a_{ij}\geq 0$. Hence, only one integer, say $a_{ij_0}$, is equal to $1$ and the others are $0$. We see that $A(\alpha_i) = \alpha_{j_0}$. In other words, $A\in S_n$. Clearly, $S_n\subset \Deck(\chi, \Delta)$. The proof is complete.  
\end{proof}

\section{Invariant multiplicity-free polynomials}\label{sec inv polynim}

Let $\Delta\subset \Delta_n$ be $S_n$-invariant, $L = \Delta/S_n$ and $\mcV$, $\mcU$ together with $\pp: \mcV\to \mcU$ be as in Remark \ref{rem any S-invariant mult free is c covering}. In other words, $\pp: \mcV\to \mcU$ is a multiplicity-free covering of type $\Delta$. Let $(x_i,\xi^k_{i_k})$, where $k\beta\in L\setminus \{0\}$, $i=1,\ldots, n_0$ and $i_k = 1,\ldots, n_k$, be local coordinates of $\mcU$, and $(y_i,t_{i_{\delta}}^{\delta})$, where $\delta\in \Delta\setminus \{0\}$ and $i_{\delta} = 1,\ldots, n_{\sharp \delta}$, be local coordinates of $\mcV$ with parities defined as in Remark \ref{rem any S-invariant mult free is c covering}. We define the following action of $S_n$ in $\mcO_{\mcV}$
$$
s\cdot t_{j}^{\delta} = t_{j}^{s\cdot \delta}, \quad s\in S_n. 
$$

Let us study the structure of $S_n$-invariant multiplicity-free polynomials in  variables $(t_{i_{\delta}}^{\delta})$ with functional coefficients in $(y_i)$. We put
$$
\delta^k_0 := \alpha_1+\alpha_2+\cdots + \alpha_k\in \Delta, \quad k\beta\in L.
$$ 
 Let $\gamma =(\gamma_1, \ldots, \gamma_q)$ be a decomposition of the weight $\delta^k_0$ such  that $\gamma_1+ \cdots+ \gamma_q=\delta^k_0$, where  $\gamma_i\in \Delta\setminus \{0\}$ are of the following form
\begin{equation}\label{eq element from Lambda decomposition}
\begin{split}
&\gamma_1 = \alpha_1+\cdots+ \alpha_{s_1};\\
&\gamma_2 = \alpha_{s_1+1}+\cdots+ \alpha_{s_2};\\
&\cdots\\
&\gamma_q = \alpha_{s_{q-1}+1}+\cdots+ \alpha_{k}, \quad \sharp \gamma_1\leq \cdots\leq \sharp \gamma_q.\\
\end{split}
\end{equation}
 Denote by $\Lambda$ the set of all possible such decompositions of $\delta^k_0$ for any $k$. 

\begin{example}
Consider $\delta_0^3= \alpha_1+\alpha_2+\alpha_3$. 
The decomposition $(\alpha_1,\alpha_2+\alpha_3)$ of  $\delta_0^3$ is an element of  $\Lambda$, but  $(\alpha_1+\alpha_2,\alpha_3)\notin \Lambda$ since $\sharp \gamma_1>\sharp \gamma_2$. Clearly, $(\alpha_1+\alpha_3,\alpha_2)\notin \Lambda$.
\end{example}

If $\delta = \delta_1 + \ldots + \delta_q\in \Delta$ is any decomposition of a multiplicity-free weight $\delta\in \Delta$ into a sum of non-zero weights $\delta_i\in \Delta\setminus \{0\}$, then for any $s\in S_n$ we put
$$
s\cdot (\delta_1,\ldots, \delta_q) = (s\cdot \delta_1,  \ldots, s\cdot \delta_q). 
$$
We call two such decompositions $\delta = \sum\delta_i$ and $\delta' = \sum\delta'_i$ equal if the sets of weights $\{\delta_i\}$ and $\{\delta'_i\}$ are equal.

\begin{lemma}\label{lem action on Lambda}
{\bf (1)} Let $\delta= \sum\delta_i$ be a decomposition of a multiplicity-free weight 
$$
\delta = \alpha_{i_1} + \cdots + \alpha_{i_k}\in \Delta\setminus \{0\}
$$ 
into a sum of non-trivial weights $\delta_i\in \Delta\setminus \{0\}$. Then there exists an element $s\in S_n$ and $\gamma\in \Lambda$  such that $s\cdot \delta = \gamma$. 

{\bf (2)}  If $\gamma, \gamma'\in \Lambda$ and $s\cdot \gamma = \gamma'$ for some $s\in S_n$, then $\gamma= \gamma'$. 
\end{lemma}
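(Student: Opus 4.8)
The plan is to treat the two parts separately but with a common combinatorial idea: both are really statements about set partitions of an index set rather than about the weights themselves. Recall that a multiplicity-free weight $\delta = \alpha_{i_1}+\cdots+\alpha_{i_k}$ is determined by the subset $\{i_1,\ldots,i_k\}\subset\{1,\ldots,n\}$, and a decomposition $\delta=\sum_j\delta_j$ into nonzero multiplicity-free weights corresponds exactly to an \emph{unordered set partition} of $\{i_1,\ldots,i_k\}$ into nonempty blocks (the multiplicity-freeness forces the blocks to be disjoint, and ``equal decompositions'' in the sense of the paper means equal set partitions). The elements $\gamma\in\Lambda$ in \eqref{eq element from Lambda decomposition} are then precisely those set partitions of an initial segment $\{1,\ldots,k\}$ whose blocks are the \emph{consecutive} intervals $\{1,\ldots,s_1\}, \{s_1+1,\ldots,s_2\},\ldots$, listed in order of non-decreasing size. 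So $\Lambda$ is a distinguished set of ``normal forms'' for set partitions, and the lemma says each $S_n$-orbit of set partitions (of a $k$-element subset of $\{1,\ldots,n\}$) meets $\Lambda$, and meets it only once.

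For part \textbf{(1)}: given $\delta=\sum_i\delta_i$ with $\delta$ supported on $\{i_1,\ldots,i_k\}$, first pick any permutation $s_0\in S_n$ carrying $\{i_1,\ldots,i_k\}$ onto $\{1,\ldots,k\}$; this reduces to the case $\delta=\delta_0^k$. Now reorder the blocks of the partition so their sizes are non-decreasing, say of sizes $p_1\le p_2\le\cdots\le p_q$ with $p_1+\cdots+p_q=k$, and choose a permutation of $\{1,\ldots,k\}$ (extended by the identity on the rest, hence an element of $S_n$) that sends the first block to $\{1,\ldots,p_1\}$, the second to $\{p_1+1,\ldots,p_1+p_2\}$, and so on. Composing with $s_0$ gives the required $s\in S_n$ with $s\cdot\delta\in\Lambda$. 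This part is essentially immediate once the dictionary with set partitions is in place; I would write it in a few lines.

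For part \textbf{(2)}: suppose $\gamma,\gamma'\in\Lambda$ and $s\cdot\gamma=\gamma'$. Since $s$ maps the set of blocks of $\gamma$ bijectively onto the set of blocks of $\gamma'$ and preserves block sizes, the underlying index sets agree, $\{1,\ldots,k\}=\{1,\ldots,k'\}$, so $k=k'$, and the multiset of block sizes of $\gamma$ equals that of $\gamma'$, say $p_1\le\cdots\le p_q$. But a decomposition in $\Lambda$ is \emph{uniquely} determined by its sequence of block sizes: the blocks must be the consecutive intervals $\{1,\ldots,p_1\}$, $\{p_1+1,\ldots,p_1+p_2\}$, etc. Hence $\gamma=\gamma'$ as set partitions, i.e.\ as decompositions. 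The main point to be careful about — and the step I expect to require the most attention — is the bookkeeping around the ``non-decreasing size'' normalization when several blocks have equal size: I must check that the definition in \eqref{eq element from Lambda decomposition} pins down the partition and not merely an ordered tuple, so that two genuinely different orderings of equal-size blocks are identified (they give the same set of weights, hence the same element of $\Lambda$), and conversely that no collision between inequivalent partitions can occur. Once that is settled, the uniqueness in (2) is forced by the interval structure, and there is no real obstacle beyond making this identification explicit.
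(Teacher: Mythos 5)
Your proof is correct and follows essentially the same route as the paper: for (1) you sort the blocks by non-decreasing length and use multiplicity-freeness to permute them onto consecutive initial intervals, and for (2) you observe that an element of $\Lambda$ is completely determined by its sequence of block lengths, which the $S_n$-action preserves. The set-partition dictionary is just a convenient rephrasing of the paper's argument, not a different method.
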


\begin{proof}

Let us prove {\bf (1)}. We can always assume that $\sharp  \delta_1\leq \cdots \leq \sharp \delta_q$.  Secondly, $\delta$ is multiplicity-free, so we can find $s\in S_n$ such that $(s\cdot \delta_1, \ldots, s\cdot \delta_q)$ is of the form (\ref{eq element from Lambda decomposition}).

Let us prove {\bf (2)}. Let $s\cdot (\gamma_1, \ldots, \gamma_q) = (\gamma'_1, \ldots, \gamma'_q)$, where $\gamma_i$ and $\gamma'_i$ have the form (\ref{eq element from Lambda decomposition}). Note that the decomposition (\ref{eq element from Lambda decomposition}) is completely determined by the length of $\gamma_i$ and $\gamma'_i$,  and $s$ permutes weighs of equal length. Hence, we must have the equality $\gamma = \gamma'$.  
\end{proof}

\begin{definition}
We call a monomial $T =t^{\gamma_1}_{i_1} \cdots t^{\gamma_q}_{i_q}$ primitive if $(\gamma_1, \ldots, \gamma_q)\in \Lambda$, $\sharp  \gamma_1\leq \cdots \leq \sharp \gamma_q$ and the equality $\sharp \gamma_{j} = \sharp\gamma_{j+1}$ implies $i_{j}\leq i_{j+1}$. 
\end{definition}

Consider some examples. 

\begin{example}
The monomials $t_i^{\alpha_1+\alpha_3}\cdot t_j^{\alpha_2}$, $t_i^{\alpha_1+\alpha_2}\cdot t_j^{\alpha_2}$ and $t_2^{\alpha_1}\cdot t_1^{\alpha_2}$ are not primitive. 
\end{example}

\begin{example}\label{ex S_2-orbit of primitive}
Let $n=2$. 
The monomial $t_1^{\alpha_1}\cdot t_1^{\alpha_2}$ is primitive.  If  $t_1^{\alpha_1}$ is even (hence $t_1^{\alpha_2}$ is also even), we have
$$
\sum_{s\in S_2} s\cdot (t_1^{\alpha_1}\cdot t_1^{\alpha_2}) = t_1^{\alpha_1}\cdot t_1^{\alpha_2} + t_1^{\alpha_2}\cdot t_1^{\alpha_1} =  2  t_1^{\alpha_1}\cdot t_1^{\alpha_2}.
$$
 If  $t_1^{\alpha_1}$ is odd (hence $t_1^{\alpha_2}$ is also odd), we have
$$
\sum_{s\in S_2} s\cdot (t_1^{\alpha_1}\cdot t_1^{\alpha_2}) = t_1^{\alpha_1}\cdot t_1^{\alpha_2} + t_1^{\alpha_2}\cdot t_1^{\alpha_1} =  0.
$$
\end{example}

We generalize the observation of Example \ref{ex S_2-orbit of primitive} in the following lemma.

\begin{lemma}\label{lem orbit of T=0 iff}
Let $T=t^{\gamma_1}_{i_1} \cdots t^{\gamma_q}_{i_q}$ be a primitive monomial. The following statements are equivalent:

\begin{enumerate}
    \item We have $\sum\limits_{s \in S_n} s\cdot T=0$.
    \item The monomial $T$ contains two odd factors $t^{\gamma_p}_{i_p}$ and $t^{\gamma_q}_{i_q}$ such that $\sharp \gamma_{p} = \sharp\gamma_{q}$ and $i_{p}=i_{q}$.
\end{enumerate}
\end{lemma}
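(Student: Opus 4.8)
The plan is to analyze the action of $S_n$ on the monomial $T = t^{\gamma_1}_{i_1} \cdots t^{\gamma_q}_{i_q}$ and to recognize $\sum_{s\in S_n} s\cdot T$ as a symmetrization that picks up signs from reordering odd factors. First I would observe that, because $T$ is primitive and in particular $(\gamma_1,\dots,\gamma_q)\in\Lambda$, the weights $\gamma_j$ are pairwise distinct as soon as their lengths differ, while factors of equal length can coincide only as weights when their upper and lower indices agree; this is exactly the situation described in statement~(2). Next I would stratify the action: by Lemma~\ref{lem action on Lambda}(2), no $s\in S_n$ (other than those fixing $(\gamma_1,\dots,\gamma_q)$) sends the decomposition $(\gamma_1,\dots,\gamma_q)$ back to itself, so the orbit sum $\sum_{s} s\cdot T$ is, up to a nonzero scalar, a sum over a set of \emph{distinct} monomials $s\cdot T$, indexed by cosets of the stabilizer. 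Hence $\sum_s s\cdot T = 0$ in $\mcO_{\mcV}$ (where no cancellation between distinct monomials is possible) if and only if the coefficient $\sum_{s\in \mathrm{Stab}} (\text{sign incurred by } s)$ vanishes, where $\mathrm{Stab}$ is the subgroup of $S_n$ that permutes the $\gamma_j$ among themselves preserving the upper and lower index data so that $s\cdot T$ equals $\pm T$ as an element of the super-symmetric algebra.

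The heart of the argument is then identifying this stabilizer and the associated signs. The stabilizer consists of permutations $s$ such that the multiset of pairs $(\gamma_j, i_j)$ is preserved; since distinct lengths force distinct weights, $s$ can only act nontrivially by permuting, within each block of factors of a common length $\ell$, those factors, and within such a block it must realize a permutation of the weights $\gamma_j$ of that length (each $\gamma_j$ being a distinct subset of $\{\alpha_1,\dots,\alpha_n\}$ of size $\ell$) that is compatible with the lower indices $i_j$. If all factors are even, every $s\in\mathrm{Stab}$ contributes $+1$, the coefficient is $|\mathrm{Stab}|\ne 0$, and the orbit sum is nonzero; so I would observe at the outset that to get $0$ we need odd factors, consistent with $|\alpha_1|=\odd$. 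If the factors are odd, a transposition in $\mathrm{Stab}$ that swaps two factors $t^{\gamma_p}_{i_p}$, $t^{\gamma_q}_{i_q}$ with $\gamma_p=\gamma_q$, $i_p=i_q$ (these are then literally the \emph{same} odd generator, so $t^{\gamma_p}_{i_p}\cdot t^{\gamma_q}_{i_q}$ is not automatically zero only because $p\ne q$ as positions — wait, two equal odd coordinates square to zero) — more carefully: if the monomial contains the same odd coordinate $t^{\gamma}_i$ twice then $T=0$ already; so the relevant transposition swaps two \emph{distinct} positions carrying generators with $\sharp\gamma_p = \sharp\gamma_q$, and in the primitive normal form $i_p\le i_q$. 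The key computation is that such a swap contributes a sign $-1$ (odd$\times$odd), and one then pairs up the elements of the stabilizer into cosets of a subgroup of order $2$ generated by such a transposition to conclude $\sum_{s\in\mathrm{Stab}}(\pm 1) = 0$. Conversely, if no such pair $(p,q)$ exists, then any $s\in\mathrm{Stab}$ that fixes $T$ up to sign must act as an \emph{even} permutation of the factors — indeed it permutes factors of equal length with distinct lower-index data, and compatibility forces it to be the identity on positions — so $\mathrm{Stab}$ acts trivially, every contribution is $+1$, and the orbit sum is nonzero.

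I would organize the write-up as: (i) reduce to the case $|\alpha_i|=\odd$, noting the even case is immediate; (ii) express $\sum_s s\cdot T$ as $(\text{number of cosets of }\mathrm{Stab})^{-1}$-type sum, i.e. group $S_n = \bigsqcup_\tau \tau\,\mathrm{Stab}$ and write $\sum_s s\cdot T = \sum_\tau \tau\cdot\big(\sum_{\sigma\in\mathrm{Stab}}\sigma\cdot T\big)$, with the outer monomials $\tau\cdot T$ pairwise distinct by Lemma~\ref{lem action on Lambda}(2); (iii) compute $\sum_{\sigma\in\mathrm{Stab}}\sigma\cdot T = \big(\sum_{\sigma\in\mathrm{Stab}}\varepsilon(\sigma)\big)\,T$ where $\varepsilon(\sigma)\in\{\pm 1\}$ is the sign from commuting odd generators past one another; (iv) show $\sum_{\sigma\in\mathrm{Stab}}\varepsilon(\sigma)$ is $0$ exactly when condition~(2) holds, using the transposition-pairing for the "$\Leftarrow$" direction and the triviality of $\mathrm{Stab}$ for the "$\Rightarrow$" direction. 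The main obstacle I anticipate is step~(iv): pinning down precisely that the only elements of $\mathrm{Stab}$ acting up to sign on $T$ are those generated by transpositions of positions carrying \emph{identical} generators (so $\varepsilon$ is a nontrivial character on $\mathrm{Stab}$ iff such a transposition exists), and being careful that a "swap of identical odd generators" does not silently make $T$ itself zero — the resolution is that $T$ is a product over distinct positions, and $T=0$ already if the same odd generator appears twice, so in the surviving case the swapped generators, though having equal $\gamma$ and equal lower index as \emph{labels}, must in fact be checked to not coincide; this forces a slightly more careful bookkeeping of what "two odd factors with $\sharp\gamma_p=\sharp\gamma_q$ and $i_p=i_q$" means, and I expect the cleanest formulation is in terms of the permutation $\sigma$ inducing a transposition on the index set while the underlying generator repeats, giving sign $-1$.
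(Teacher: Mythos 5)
Your strategy is sound, and in one direction it is the same mechanism as the paper's: for (2)$\Rightarrow$(1) the paper also builds the block-swapping involution $s'$ and cancels the sum along the right cosets of the order-two subgroup $\{\id,s'\}$; your version, summing the sign character over the full stabilizer $\operatorname{Stab}=\{\sigma\in S_n \mid \sigma\cdot T=\pm T\}$ and over coset representatives, is the same cancellation packaged more systematically. Where you genuinely diverge is the converse: the paper proves (1)$\Rightarrow$(2) by a case analysis on the lengths $\sharp\gamma_j$ (all distinct, all equal, then a mixed case via a factorization $T=T_1\cdot T_2$), whereas your observation that any $\sigma$ with $\sigma\cdot T=-T$ induces a permutation $\pi$ of the positions satisfying $i_{\pi(j)}=i_j$ and $\sharp\gamma_{\pi(j)}=\sharp\gamma_j$, with Koszul sign equal to the sign of $\pi$ restricted to the odd positions, treats all cases uniformly: a nontrivial cycle of $\pi$ through an odd position immediately produces the pair demanded in (2). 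This buys a cleaner converse at the price of introducing the stabilizer formalism; the paper's route is more pedestrian but needs no such bookkeeping.

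Three points in your sketch need repair before it is a complete proof. First, the digression about ``identical odd generators'' rests on a misreading of (2): the condition is $\sharp\gamma_p=\sharp\gamma_q$ and $i_p=i_q$, not $\gamma_p=\gamma_q$; since $T$ is primitive the weights $\gamma_j$ are disjoint blocks as in (\ref{eq element from Lambda decomposition}), hence pairwise distinct, so the two factors are distinct variables, no generator ever repeats, and $T\neq 0$ — the closing sentence about ``the underlying generator repeats'' should be deleted. Second, in the converse you assert that compatibility forces $\sigma\in\operatorname{Stab}$ to act as the identity on positions; this is false when two \emph{even} factors share a length and a lower index (such $\sigma$ exist and permute them), but it is also unnecessary: only odd positions contribute signs, and failure of (2) forces $\pi$ to fix every odd position, which already gives $\varepsilon(\sigma)=+1$. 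Third, the pairwise distinctness of the monomials $\tau\cdot T$ over distinct cosets does not follow from Lemma \ref{lem action on Lambda}(2), since the translates $\tau\cdot T$ need not be primitive; it is immediate from the definition of $\operatorname{Stab}$ (if $\tau\cdot T=\pm\tau'\cdot T$ then $\tau'^{-1}\tau\in\operatorname{Stab}$), and that is the justification you should give.
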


\begin{proof}
Let  (2) holds. Denote $m:=\sharp \gamma_{p}-1$.   Then 
$$
\gamma_{p} = \alpha_{a_1} + \alpha_{a_1+1}+ \cdots + \alpha_{a_{1 }+m} ,\quad \gamma_{q}= \alpha_{b_1} + \alpha_{b_1+1} + \cdots + \alpha_{b_1+m} .
$$  
Since $T$ is multiplicity-free, we define $s'\in S_n$ by $s'( \alpha_{a_1+i}) =  \alpha_{b_1+i}$, $s'( \alpha_{b_1+i}) =  \alpha_{a_1+i}$ for any $i=0, \ldots, m$ and $s' (\alpha_j) = \alpha_j$, for other $j$.  We have $(s')^2=\id$. Hence, $S':=\{\id, s'\}\subset S_n$ is a subgroup. Therefore, $S_n$ is divided into $S'$-orbits with respect to the right action $S'$ on $S_n$, which do not intersect. Hence the sum $\sum\limits_{s \in S_n} s\cdot T$ can be written as a sum of $s\cdot T + s\cdot (s'\cdot T)$ for some $s\in S_n$. 
 Further, for any $s\in S_n$ we have
$$
s\cdot T + s\cdot (s'\cdot T) = s\cdot (T + s'\cdot T) = s\cdot (T - T)= 0.
$$
Hence,  $\sum\limits_{s \in S_n} s\cdot T=0$.

Now assume that $\sum\limits_{s \in S_n} s\cdot T =0$, but (2) does not hold.  Since the sum is $0$, the monomial $T$ has to cancel with a monomial $s\cdot T$ for some $s\in S_n$. This implies that there are at least two weights $\gamma_{i}, \gamma_{j}$ with $\sharp \gamma_{i} = \sharp\gamma_{j}$. Indeed, if $\sharp \gamma_{i} \ne  \sharp\gamma_{j}$ for any $i\ne j$, then 
$$
s\cdot t^{\gamma_1}_{i_1} \cdots t^{\gamma_q}_{i_q} = t^{s\cdot\gamma_1}_{i_1} \cdots t^{s\cdot\gamma_q}_{i_q} =- T
$$
is possible only if $s\cdot\gamma_i = \gamma_i$ for any $i$.  But in this case $s\cdot  T= +T$.

Now assume that all $\gamma_j$ have the same length.  Note that  any $s\in S_n$ permutes the weights with the same length.  Since $\gamma_j$ have all the same length, all $t_{i_j}^{\gamma_j}$ are even or they all are odd. If they  all are even we may only have
$$
t_{i_1}^{s \cdot\gamma_1} \cdots t_{i_q}^{s \cdot \gamma_q} = +T,
$$ 
hence $T$ and $s\cdot T$ cannot cancel. 
If they all are odd, and $i_1< \cdots < i_q$ (the indexes are pairwise different), then $t_{i_1}^{\gamma_1} \cdots t_{i_q}^{\gamma_q} $ and  $t_{i_1}^{s \cdot\gamma_1} \cdots t_{i_q}^{s \cdot \gamma_q}$ cannot cancel since these monomials contain different variables or since $s\cdot \gamma_i  = \gamma_i$ for any $i$ and $t_{i_1}^{s \cdot\gamma_1} \cdots t_{i_q}^{s \cdot \gamma_q} = +T$. In this case the proof is complete.

Let $\sharp\gamma_{j_1}= \cdots = \sharp \gamma_{j_p}$ and other $\gamma_j$ have different length with $\gamma_{j_1}$. We can write $T= T_1\cdot T_2$, where $T_1= t_{i_{j_1}}^{\gamma_{j_1}} \cdots t_{i_{j_1}}^{\gamma_{j_p}} $. Assuming 
$$
s\cdot T = s\cdot T_1 \cdot s\cdot T_2 = -T_1\cdot T_2,
$$ 
we get that $s\cdot T_1 = \pm T_1$ and $s\cdot T_1 = \pm T_1$. Without loss of generality we may assume that $s\cdot T_1 = - T_1$ and $s\cdot T_2 = + T_2$. By above this implies that (2) holds. 
\end{proof}

Let $F\in \mcO_{\mcV}$ be a $S_n$-invariant $\Z$-homogeneous function.  Since $\mcV$ is a multiplicity-free domain, $F$ is a polynomial in variables $(t_{i_{\delta}}^{\delta})$ with functional coefficients in $(y_i)$. As any polynomial, $F$ is a sum of (different) monomials of multiplicity-free weight.

\begin{lemma}\label{lem any orbit contain a primitive monomial}
Let $T' = t^{\delta_1}_{i_1} \cdots t^{\delta_q}_{i_q}$, where $\delta_j\in \Delta\setminus \{0\}$, be a monomial of a multiplicity-free weight. That is $\delta_1 + \cdots +\delta_q$ is  multiplicity-free. Then 
$$
\sum\limits_{s \in S_n}s \cdot T' =\pm  \sum\limits_{s \in S_n}s \cdot T,
$$ 
where $T$ is a primitive monomial. Moreover, if $\sum\limits_{s \in S_n}s \cdot T' \ne 0$, then $T$ is unique.
\end{lemma}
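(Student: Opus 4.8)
The plan is to reduce an arbitrary multiplicity-free monomial $T'$ to a primitive one by acting with a suitable permutation, and then to sum over the whole group $S_n$ to absorb the permutation, at the cost of a sign. First I would write $\delta = \delta_1 + \cdots + \delta_q$ for the weight of $T'$; this is multiplicity-free by hypothesis, so after reindexing the factors we may assume $\sharp\delta_1 \leq \cdots \leq \sharp\delta_q$. By Lemma~\ref{lem action on Lambda}(1) there is $s_0 \in S_n$ with $s_0\cdot(\delta_1,\ldots,\delta_q) = \gamma$ of the form \eqref{eq element from Lambda decomposition}, i.e. $\gamma\in\Lambda$. Then $s_0\cdot T' = \pm\, t^{\gamma_1}_{i_{\sigma(1)}}\cdots t^{\gamma_q}_{i_{\sigma(q)}}$ for some permutation $\sigma$ of the lower indices and a sign coming from commuting odd factors; among the blocks of equal length I may further reorder the lower indices into nondecreasing order (again picking up a sign from odd factors), producing a primitive monomial $T$ with $s_0\cdot T' = \pm\, T$.

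Next I would use the $S_n$-invariance of the summation operator. For any fixed $s_0\in S_n$, the map $s\mapsto s s_0^{-1}$ is a bijection of $S_n$, so
\[
\sum_{s\in S_n} s\cdot T' \;=\; \sum_{s\in S_n} (s s_0^{-1})\cdot(s_0\cdot T') \;=\; \sum_{s\in S_n} s\cdot(s_0\cdot T') \;=\; \pm \sum_{s\in S_n} s\cdot T,
\]
where the sign is the one produced in the first paragraph. This gives the displayed identity.

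For uniqueness under the hypothesis $\sum_{s\in S_n} s\cdot T' \neq 0$: suppose $T$ and $\widetilde T$ are both primitive and $\sum_s s\cdot T = \pm \sum_s s\cdot \widetilde T \neq 0$. Comparing the weights of the two sums, $T$ and $\widetilde T$ have the same multiplicity-free weight $\delta$; by Lemma~\ref{lem action on Lambda} applied to this weight, the $\Lambda$-representatives of their exponent tuples coincide, so after the canonical reordering the exponent tuples $(\gamma_i)$ of $T$ and $\widetilde T$ are literally equal. Since both are primitive, within each block of equal length the lower indices of each are already in nondecreasing order; if those index data were different, then $\widetilde T$ would appear with nonzero coefficient in $\sum_s s\cdot T$ — here I would invoke Lemma~\ref{lem orbit of T=0 iff} to guarantee that the surviving sum is nonzero precisely when no block repeats an index among odd factors, and in that situation the primitive monomials occurring in $\sum_s s\cdot T$ are exactly the $S_n$-translates of $T$ that are themselves primitive, which is $T$ alone by Lemma~\ref{lem action on Lambda}(2). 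Hence $\widetilde T = T$.

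The main obstacle I anticipate is bookkeeping the signs and the "repeated index in a block" subtlety: one must be careful that reordering odd factors within a primitive block can change the sign, and that Lemma~\ref{lem orbit of T=0 iff} is exactly what rules out the degenerate cancellations, so the nonvanishing hypothesis is what makes the primitive representative well-defined. Everything else — the reduction to $\Lambda$ and the reindexing of the sum over $S_n$ — is routine given Lemmas~\ref{lem action on Lambda} and~\ref{lem orbit of T=0 iff}.
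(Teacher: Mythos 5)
Your proposal follows essentially the same route as the paper: order the factors, use Lemma~\ref{lem action on Lambda}(1) to bring the weights into the canonical form (\ref{eq element from Lambda decomposition}), absorb the chosen permutation into the sum over $S_n$, and obtain uniqueness from Lemma~\ref{lem action on Lambda}(2) combined with Lemma~\ref{lem orbit of T=0 iff}. It is correct in substance, but one bookkeeping step as written does not yet produce a primitive monomial: after applying $s_0$ you sort the lower indices inside an equal-length block by commuting the factors, and commuting factors carries the upper weights along with them, so the resulting weight tuple is in general no longer in the canonical order of (\ref{eq element from Lambda decomposition}); that monomial is therefore not primitive in the sense of the definition. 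The cure is either to sort the lower indices within equal-length blocks already at the level of $T'$, before invoking Lemma~\ref{lem action on Lambda}(1) --- this is what the paper does, and then $s_0\cdot T'$ is itself primitive, the only sign coming from the initial supercommutation of the factors of $T'$ --- or to apply one further element of $S_n$ permuting the (disjoint, equal-length) weights inside each block, which restores the canonical weight order without touching the lower indices and is again absorbed by the sum over $S_n$. Note also that the $S_n$-action changes only the upper weights, so with $s_0$ chosen as in the proof of Lemma~\ref{lem action on Lambda}(1) (sending $\delta_j$ to $\gamma_j$ in order) there is no permutation $\sigma$ of lower indices and no sign at that stage; your uniqueness argument, modulo its phrasing, is the paper's.
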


\begin{proof} Without loss of generality, we may assume that $\sharp \delta_1\leq \cdots\leq \sharp \delta_q$ and that if $\sharp\delta_j = \sharp\delta_{j+1}$, then  $i_{j}\leq i_{j+1}$. Since $T'$ is multiplicity-free, we can find an $s\in S_n$, which sends $\delta_i$ to $\gamma_i$, where $\gamma_1,\ldots, \gamma_q$ are of the form (\ref{eq element from Lambda decomposition}). Clearly $T=s\cdot T'$ is primitive. 
Furthermore, we have
$$
\sum\limits_{s \in S_n}s \cdot T' = \sum\limits_{s \in S_n}s \cdot T.
$$
If this sum is not $0$, then $T$ is unique. In fact, assume that we have another primitive monomial $\tilde T= s_0\cdot T$ for some $s_0\in S_n$. Then,
$$
s_0\cdot T= t^{s_0\cdot\gamma_1}_{i_1} \cdots t^{s_0\cdot\gamma_q}_{i_q} = \tilde T.
$$
By Lemma \ref{lem action on Lambda}, $T$ and $\tilde T$ have the same weight. Note that $s_0$ permutes weights of equal length. By Lemma \ref{lem orbit of T=0 iff}, if $\sharp \gamma_p =\sharp \gamma_{p+1}$ and $|t^{\gamma_p}_{i_p}| =\bar 1$, then $i_p< i_{p+1}$. Hence, $\tilde T$ is primitive only if $s_0\cdot\gamma_j = \gamma_j $, if $|\gamma_j| =\bar 1$. But in this case $T= \tilde T$.  
\end{proof}

\begin{definition}\label{def multiplicity-free polynom}
 We call an element $F\in \mcO_{\mcV}$ a multiplicity-free function or a multiplicity-free polynomial. The element $F$ is called $S_n$-invariant if $s\cdot F= F$ for any $s\in S_n$. 
\end{definition}

\begin{lemma}\label{lem S-orbit is a sum of s-orbits of ptimitives}
Any $S_n$-invariant multiplicity-free polynomial $F$ can be written in a unique way in the following form 
\begin{equation}\label{eq decomposition of an invariant polynomial}
  F = A_1(y_i) \sum\limits_{s \in S_n}s \cdot T_1 + \cdots + A_m(y_i) \sum\limits_{s \in S_n}s \cdot T_m,
\end{equation}
where $A_j(y_i)\ne 0$ are some functions in $y_i$, $\sum\limits_{s \in S_n}s \cdot T_j\ne 0$ and $T_1, \ldots, T_m$ are different primitive monomials. 
\end{lemma}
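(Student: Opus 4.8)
The plan is to prove the statement by reducing it to two combinatorial facts about the orbit sums $\sum_{s\in S_n}s\cdot T$ of primitive monomials, obtaining existence by averaging over $S_n$ and uniqueness by a disjointness argument; Lemmas~\ref{lem action on Lambda}, \ref{lem orbit of T=0 iff} and \ref{lem any orbit contain a primitive monomial} do essentially all the work. First I would note that $F$ is automatically a \emph{finite} $\mcF$-linear combination of monomials in the $t$-variables: a monomial $t^{\delta_1}_{i_1}\cdots t^{\delta_q}_{i_q}$ is nonzero in $\mcO_{\mcV}$ only when $\delta_1+\cdots+\delta_q$ is multiplicity-free, which forces the supports of the $\delta_l$ inside $\{\alpha_1,\dots,\alpha_n\}$ to be pairwise disjoint, so $q\leq n$ and only finitely many such monomials exist. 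Thus $F=\sum_M A_M(y)\,M$, the sum being finite, with $M$ running over distinct monomials of multiplicity-free weight and $A_M\in\mcF$. I would also use that each $s\in S_n$ acts as a $\Z$-graded algebra automorphism of $\mcO_{\mcV}$ fixing the coordinates $y_i$, so $\sum_{s}s\cdot M\in\mcO_{\mcV}$ is well defined and $s\cdot(A_M(y)M)=A_M(y)(s\cdot M)$.

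For existence I would write $F=\tfrac1{n!}\sum_{s\in S_n}s\cdot F=\tfrac1{n!}\sum_M A_M(y)\sum_{s\in S_n}s\cdot M$ using $S_n$-invariance of $F$. By Lemma~\ref{lem any orbit contain a primitive monomial}, for each $M$ the inner sum $\sum_{s}s\cdot M$ is either $0$ or equals $\varepsilon_M\sum_{s}s\cdot T_M$ with $\varepsilon_M\in\{\pm1\}$ and a primitive monomial $T_M$ that is uniquely determined in the nonzero case (the vanishing case being exactly when the associated primitive satisfies condition~(2) of Lemma~\ref{lem orbit of T=0 iff}). Discarding the $M$ with $\sum_s s\cdot M=0$, grouping the remaining terms by the value of $T_M$, setting $A_j(y):=\tfrac1{n!}\sum_{M:\,T_M=T_j}\varepsilon_M A_M(y)$, and finally dropping the indices $j$ with $A_j\equiv 0$, I would arrive at $F=\sum_j A_j(y)\sum_{s\in S_n}s\cdot T_j$ with the $T_j$ distinct primitive monomials and every $\sum_s s\cdot T_j\neq 0$.

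For uniqueness I would first prove two auxiliary facts. (i) If $T\neq T'$ are primitive monomials with $\sum_s s\cdot T\neq 0\neq\sum_s s\cdot T'$, then $\sum_s s\cdot T$ and $\sum_s s\cdot T'$ involve disjoint sets of monomials: every monomial occurring in $\sum_s s\cdot T$ has the form $\pm s\cdot T$, so a monomial common to both sums would give $T'=\pm s_0\cdot T$ for some $s_0$, hence $\sum_s s\cdot T'=\pm\sum_s s\cdot T\neq 0$, and the uniqueness clause of Lemma~\ref{lem any orbit contain a primitive monomial} would force $T=T'$. (ii) If $T$ is primitive with $\sum_s s\cdot T\neq 0$, then the coefficient $c_T\in\Z$ of the monomial $T$ in $\sum_s s\cdot T$ is nonzero, hence invertible in $\mcF$: the element $\sum_s s\cdot T$ is nonzero and $S_n$-invariant, so it involves some monomial $M=\pm s_1\cdot T$ with nonzero coefficient, and since $s_1^{-1}$ fixes $\sum_s s\cdot T$ the coefficient of $T$ is $\pm$ that of $M$. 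Granting (i) and (ii), given two expressions of $F$ of the stated form I would subtract them, combine the summands with equal primitive monomial to get a relation $\sum_l C_l(y)\sum_s s\cdot \tilde T_l=0$ with the $\tilde T_l$ distinct primitive of nonzero orbit sum, read off the coefficient of the monomial $\tilde T_l$ — which by (i) equals $C_l(y)\,c_{\tilde T_l}$ — to get $C_l(y)\,c_{\tilde T_l}=0$, and conclude $C_l\equiv 0$ for all $l$ by (ii); since the coefficients appearing in the two given expressions are nonzero, this forces the two expressions to have identical terms and coefficients.

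The only non-formal step is (ii): the nonvanishing of $c_T$ is precisely the content of Lemma~\ref{lem orbit of T=0 iff}, namely that a primitive monomial cancels against no member of its own signed $S_n$-orbit exactly when its orbit sum is nonzero — so the genuine combinatorics, including all interaction with the signs coming from odd coordinates, has already been absorbed into that lemma. The sign bookkeeping in the averaging step is routine, and the rest is formal linear algebra over the ring $\mcF$.
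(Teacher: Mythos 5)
Your proposal is correct and follows essentially the same route as the paper: existence by averaging $F=\tfrac1{|S_n|}\sum_s s\cdot F$ and replacing each orbit sum $\sum_s s\cdot P$ by $\pm\sum_s s\cdot T$ with $T$ primitive via Lemma~\ref{lem any orbit contain a primitive monomial}, and uniqueness from the fact that a nonzero orbit sum contains its primitive monomial (and no other primitive), so distinct terms cannot cancel. Your steps (i)--(ii) merely make explicit the coefficient-extraction argument that the paper compresses into ``$A_1(y_i)T_1$ cannot cancel.''
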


\begin{proof}
We write $F= B_1(y_i) P_1+ \cdots + B_q(y_i) P_q$ without similar terms, where $P_i$ are different monomials and $B_j(y_i)$ are functional coefficients. Then
\begin{align*}
    F = \frac{1}{|S_n|} \sum_{s \in S_n} s\cdot F = \frac{1}{|S_n|} B_1(y_i) \sum_{s \in S_n} s\cdot P_1 + \cdots + \frac{1}{|S_n|} B_q(y_i) \sum_{s \in S_n} s\cdot P_q =\\
    \frac{\pm 1}{|S_n|} B_1(y_i) \sum_{s \in S_n} s\cdot T_1 + \cdots + \frac{ \pm 1}{|S_n|} B_q(y_i) \sum_{s \in S_n} s\cdot T_q,
\end{align*}
where $T_i$ are primitive monomials, see Lemma \ref{lem any orbit contain a primitive monomial}. Adding similar terms, we get the required decomposition. Now assume that 
$$
A_1(y_i) \sum\limits_{s \in S_n}s \cdot T_1 + \cdots + A_m(y_i) \sum\limits_{s \in S_n}s \cdot T_m=0
$$
with assumptions as in (\ref{eq decomposition of an invariant polynomial}). By Lemma \ref{lem any orbit contain a primitive monomial}, any sum $\sum\limits_{s \in S_n}s \cdot T_j$ contains a unique primitive monomial $T_j$. Hence, $T_1$ does not appear anywhere in $$A_2(y_i) \sum\limits_{s \in S_n}s \cdot T_2 + \cdots + A_m(y_i) \sum\limits_{s \in S_n}s \cdot T_m.
$$
Hence, $A_1(y_i) T_1$ cannot cancel. 
\end{proof}

Let $(\xi^k_{j_k})_{k\in L\setminus \{0\}}$ be as above, see also Remark \ref{rem any S-invariant mult free is c covering}.

\begin{lemma}\label{lem primitive T = p*( ...)}
Let $T=t_{i_1}^{\gamma_1} \cdots t_{i_p}^{\gamma_p} $ be a primitive monomial with $\sharp \gamma_i = k_i>0$. Then we have two possibilities
\begin{enumerate}
    \item Both $\sum\limits_{s\in S_n} s\cdot T\ne 0$ and $\xi^{k_1}_{i_1} \cdots \xi^{k_p}_{i_p}\ne 0$. Moreover, 
    $$
\sum_{s\in S_n} s\cdot T =  M \pp^*(\xi^{k_1}_{i_1} \cdots \xi^{k_p}_{i_p}) , \quad M\in \K\setminus \{0\},
$$
where $\pp^*$ is given by Formulas (\ref{eq covering map pp}).

\item Both $\sum\limits_{s\in S_n} s\cdot T= 0$ and $\xi^{k_1}_{i_1} \cdots \xi^{k_p}_{i_p}= 0$.
\end{enumerate}

\end{lemma}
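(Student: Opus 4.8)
The plan is to compute both sides of the claimed identity directly using Formulas (\ref{eq covering map pp}) and the structure of primitive monomials, treating the even and odd cases separately.

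First I would unwind the definition of $\pp^*$. By (\ref{eq covering map pp}) we have $\pp^*(\xi^{k}_{i}) = \sum_{\sharp\delta = k} t^{\delta}_{i}$, so that
$$
\pp^*(\xi^{k_1}_{i_1} \cdots \xi^{k_p}_{i_p}) = \prod_{j=1}^{p} \Big( \sum_{\sharp\delta = k_j} t^{\delta}_{i_j} \Big) = \sum_{(\delta_1,\ldots,\delta_p)} t^{\delta_1}_{i_1} \cdots t^{\delta_p}_{i_p},
$$
where the outer sum ranges over all tuples $(\delta_1,\ldots,\delta_p)$ with $\sharp\delta_j = k_j$. The key point is that in $\mcO_{\mcV}$ (a multiplicity-free sheaf) every summand whose weight $\delta_1 + \cdots + \delta_p$ is \emph{not} multiplicity-free vanishes; the surviving terms are exactly the ones where $\delta_1,\ldots,\delta_p$ have pairwise disjoint supports, i.e. the tuples obtained from the primitive pattern $(\gamma_1,\ldots,\gamma_p)$ by applying some $s\in S_n$. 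So the nonzero part of $\pp^*(\xi^{k_1}_{i_1}\cdots\xi^{k_p}_{i_p})$ is a signed sum over the $S_n$-orbit of the tuple of weights — precisely $\sum_{s\in S_n}s\cdot T$ up to counting multiplicity of how many $s$ give the same summand and up to the consistency of signs. The main work is to show these two features (multiplicity and sign) behave correctly.

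Next I would analyze when $\xi^{k_1}_{i_1}\cdots \xi^{k_p}_{i_p} = 0$ in $\mcO_{\mcU}$. Since $\xi^{k}_{i}$ is odd precisely when $k|\beta| = \bar1$, a product of two equal odd coordinates $\xi^{k}_{i}\cdot \xi^{k}_{i}$ vanishes; hence $\xi^{k_1}_{i_1}\cdots \xi^{k_p}_{i_p}= 0$ exactly when the monomial contains a repeated odd coordinate, i.e. indices $p',q'$ with $k_{p'} = k_{q'}$, $i_{p'} = i_{q'}$ and $\xi^{k_{p'}}_{i_{p'}}$ odd. By the definition of primitivity, $t^{\gamma_{p'}}_{i_{p'}}$ is odd iff $\sharp\gamma_{p'}|\beta| = \bar 1$ iff $\xi^{k_{p'}}_{i_{p'}}$ is odd, and $k_{p'} = k_{q'}$ means $\sharp\gamma_{p'} = \sharp\gamma_{q'}$. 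So the condition ``$\xi^{k_1}_{i_1}\cdots\xi^{k_p}_{i_p} = 0$'' is literally condition (2) of Lemma \ref{lem orbit of T=0 iff}, which by that lemma is equivalent to $\sum_{s\in S_n} s\cdot T = 0$. This already establishes the dichotomy: either both vanish (case (2)), or neither does (case (1)).

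Finally, for case (1), I would pin down the constant $M$. When neither side vanishes, expand $\pp^*(\xi^{k_1}_{i_1}\cdots\xi^{k_p}_{i_p})$ as above; the surviving monomials are $t^{s\cdot\gamma_1}_{i_1}\cdots t^{s\cdot\gamma_p}_{i_p}$ for $s\in S_n$, and each distinct such monomial arises from $|\mathrm{Stab}|$ elements $s$, where the stabilizer is the subgroup of $S_n$ fixing the ordered tuple of supports; moreover, by Lemma \ref{lem orbit of T=0 iff} the absence of a repeated odd factor guarantees all these monomials carry a consistent sign (no cancellation), so $\pp^*(\xi^{k_1}_{i_1}\cdots\xi^{k_p}_{i_p}) = \varepsilon\,|\mathrm{Stab}|^{-1}\sum_{s\in S_n}s\cdot T$ for a sign $\varepsilon\in\{\pm1\}$, giving $M = \varepsilon|\mathrm{Stab}|$, a nonzero scalar. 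I expect the bookkeeping of signs in the odd case — verifying that reordering $t^{s\cdot\gamma_1}_{i_1}\cdots t^{s\cdot\gamma_p}_{i_p}$ back to the primitive order $T$ produces the \emph{same} sign for every $s$ that yields a given monomial — to be the only genuinely delicate point; it reduces, via the argument already used in the proof of Lemma \ref{lem orbit of T=0 iff}, to the statement that a permutation fixing the (ordered) tuple of supports acts on the odd factors by a permutation whose sign is $+1$, which fails exactly when there is a repeated odd support, i.e. exactly in case (2). Everything else is a routine expansion.
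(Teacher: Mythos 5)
Your proposal is correct and follows essentially the same route as the paper: the dichotomy is obtained exactly as in the paper by translating ``$\xi^{k_1}_{i_1}\cdots\xi^{k_p}_{i_p}=0$'' into condition (2) of Lemma \ref{lem orbit of T=0 iff}, and case (1) is handled by expanding $\pp^*(\xi^{k_1}_{i_1}\cdots\xi^{k_p}_{i_p})$ modulo the ideal and observing that the surviving multiplicity-free monomials are precisely those in the $S_n$-orbit of $T$. The only difference is cosmetic: you pin down $M$ by counting stabilizers of the ordered tuple of weights (and in fact no sign check is needed, since both $s\cdot T$ and the terms of the expanded product keep the factors in the same positional order, so $M=|\mathrm{Stab}|>0$), whereas the paper concludes via the unique decomposition of $S_n$-invariant multiplicity-free polynomials in Lemma \ref{lem S-orbit is a sum of s-orbits of ptimitives}.
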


\begin{proof}
If $\sum\limits_{s\in S_n} s\cdot T= 0$, then by Lemma \ref{lem orbit of T=0 iff}, it follows that the product $\xi^{k_1}_{i_1} \cdots \xi^{k_p}_{i_p}$ contains a square of an odd element, which is $0$. Conversely,  if the monomial $\xi^{k_1}_{i_1} \cdots \xi^{k_p}_{i_p}$ is $0$, it necessary contains a square of an odd variable.  Hence, by  Lemma \ref{lem orbit of T=0 iff}, the sum is also $0$. 

Assume now that $\sum\limits_{s\in S_n} s\cdot T\ne 0$ (or equivalently, $\xi^{k_1}_{i_1} \cdots \xi^{k_p}_{i_1}\ne 0$). We have 
\begin{equation}\label{eq p(xi1...xip)}
\pp^*(\xi^{k_1}_{i_1} \cdots \xi^{k_p}_{i_p}) = \pp^*(\xi^{k_1}_{i_1}) \cdots \pp^*(\xi^{k_p}_{i_p})  = \Big(\sum_{\sharp\delta_1 =k_1} t_{i_1}^{\delta_1} \Big)\cdots   \Big(\sum_{\sharp\delta_p =k_p} t_{i_p}^{\delta_p} \Big) \mod \mcI_{\mcD}. 
\end{equation}
Recall that  $\mcI_{\mcD}$ is the ideal generated by all non-multiplicity-free monomials. The $j$-sum is taken over all $\delta_j\in \Delta$ with $\sharp\delta_j=k_j$.  Clearly, $\pp^*(\xi^{k_1}_{i_1} \cdots \xi^{k_p}_{i_p}) $ is $S_n$-invariant and multiplicity-free by construction. By Lemma \ref{lem S-orbit is a sum of s-orbits of ptimitives},  $\pp^*(\xi^{k_1}_{i_1} \cdots \xi^{k_p}_{i_p})$ has the form (\ref{eq decomposition of an invariant polynomial}).
 Note that $k_1\leq \cdots \leq k_p$ and if $k_j=k_{j+1}$, we have $i_{j}\leq i_{j+1} $.  We note that any multiplicity-free monomial $t_{i_1}^{\delta_1} \cdots t_{i_p}^{\delta_p}$, compared to the right-hand side of (\ref{eq p(xi1...xip)}), is of the form $s \cdot T$ for some $s\in S_n$.  And the primitive element $T$ is present in this sum. 
 The result follows. 
\end{proof}

\begin{lemma}\label{lem xi ..xi we can construc a primitive}
    Let $\xi^{k_1}_{i_1} \cdots \xi^{k_p}_{i_p}\ne 0$ and $k_1+ \cdots + k_p = k\in L =\phi(\Delta)$. Assume also that $0<k_1\leq \cdots \leq k_p$  and if $k_j=k_{j+1}$, then $i_{j} \leq i_{j+1}$.    
    Then there exists a unique primitive monomial $T=t_{i_1}^{\gamma_1} \cdots t_{i_p}^{\gamma_p}$ such that $ \sharp \gamma_j= k_j$. 
\end{lemma}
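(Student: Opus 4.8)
The plan is to prove existence and uniqueness separately, reducing everything to the combinatorial Lemma \ref{lem action on Lambda}. For existence: given indices $k_1\leq \cdots \leq k_p$ summing to $k$, I partition the first $k$ generators $\alpha_1,\ldots,\alpha_k$ into consecutive blocks of sizes $k_1,\ldots,k_p$, setting $\gamma_1 = \alpha_1+\cdots+\alpha_{k_1}$, $\gamma_2 = \alpha_{k_1+1}+\cdots+\alpha_{k_1+k_2}$, and so on, exactly as in the template (\ref{eq element from Lambda decomposition}). Then $(\gamma_1,\ldots,\gamma_q)\in\Lambda$ by construction, and $T := t_{i_1}^{\gamma_1}\cdots t_{i_p}^{\gamma_p}$ satisfies $\sharp\gamma_j = k_j$. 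The only point needing care is that $T$ is genuinely primitive: since the $\gamma_j$ are ordered by length $\sharp\gamma_j=k_j$ and, within a block of equal $k_j=k_{j+1}$, we already assumed $i_j\leq i_{j+1}$, the definition of primitive monomial is met verbatim. (One should also note that each such $\gamma_j\in\Delta$: this is where $S_n$-invariance of $\Delta$ is used, since $\delta_0^{k}\in\Delta$ forces all weights of the given lengths obtained by reindexing to lie in $\Delta$ — alternatively this is implicit in the hypothesis $k_1+\cdots+k_p = k\in L=\phi(\Delta)$.)

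For uniqueness: suppose $T = t_{i_1}^{\gamma_1}\cdots t_{i_p}^{\gamma_p}$ and $T' = t_{i_1}^{\gamma'_1}\cdots t_{i_p}^{\gamma'_p}$ are both primitive with $\sharp\gamma_j = \sharp\gamma'_j = k_j$ for all $j$. Since $T$ is multiplicity-free (its weight $\sum\gamma_j = \delta_0^k$ is multiplicity-free) and both weight-tuples are decompositions of $\delta_0^k$ into the pattern (\ref{eq element from Lambda decomposition}), part (2) of Lemma \ref{lem action on Lambda} applies: if $s\cdot(\gamma_1,\ldots,\gamma_p) = (\gamma'_1,\ldots,\gamma'_p)$ for some $s\in S_n$ — which holds because both tuples are $S_n$-equivalent to the canonical one of the given length profile — then $(\gamma_1,\ldots,\gamma_p) = (\gamma'_1,\ldots,\gamma'_p)$ as ordered tuples. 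Hence $\gamma_j=\gamma'_j$ for every $j$, so $T=T'$. The matching of the upper indices $i_j$ is automatic since they are prescribed by the hypothesis and identical for $T$ and $T'$.

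The main obstacle is bookkeeping rather than mathematics: one must make sure the canonical decomposition produced in the existence step is exactly the normal form singled out in (\ref{eq element from Lambda decomposition}) and in the definition of primitive, and that the uniqueness argument correctly invokes Lemma \ref{lem action on Lambda}(2) — in particular, verifying that two tuples with the same length profile $(k_1,\ldots,k_p)$, each arranged into the shape (\ref{eq element from Lambda decomposition}), are related by an element of $S_n$ permuting only blocks of equal length, whence the lemma forces equality. Everything else is a direct translation of the combinatorial setup already established.
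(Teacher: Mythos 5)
Your overall route coincides with the paper's: existence via the canonical consecutive-block decomposition $\gamma_1=\alpha_1+\cdots+\alpha_{k_1}$, $\gamma_2=\alpha_{k_1+1}+\cdots+\alpha_{k_1+k_2},\ldots$, and uniqueness by relating any second primitive monomial to the first through an element of $S_n$ and invoking Lemma \ref{lem action on Lambda}(2) (the paper phrases this via transitivity of $S_n$ on the set of weights of a fixed length; in fact uniqueness is nearly immediate, since a tuple in $\Lambda$ is completely determined by its length profile). The uniqueness half of your argument is fine.

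There is, however, a genuine flaw in your justification of the one point that actually needs checking in the existence step, namely that each $\gamma_j$ lies in $\Delta$ (this is required both for the coordinate $t^{\gamma_j}_{i_j}$ to exist and for $(\gamma_1,\ldots,\gamma_p)\in\Lambda$, whose definition demands $\gamma_i\in\Delta\setminus\{0\}$). You argue that ``$\delta_0^k\in\Delta$ forces all weights of the given lengths obtained by reindexing to lie in $\Delta$.'' This inference is false: $S_n$-invariance of $\Delta$ only propagates membership among weights of the \emph{same} length, not to sub-sums of shorter length. For instance, $\Delta=\{0,\ \alpha_1+\alpha_2,\ \alpha_1+\alpha_3,\ \alpha_2+\alpha_3\}\subset\Delta_3$ is $S_3$-invariant and contains $\delta_0^2$, yet contains no weight of length one. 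Your ``alternative'' appeal to $k\in L$ has the same defect, since it again only concerns length $k$. The correct reason, which is the one the paper uses, is that each $k_j\beta$ lies in $L\setminus\{0\}$ (the factors $\xi^{k_j}_{i_j}$ are coordinates of the type-$L$ domain $\mcU$); since $L=\Delta/S_n$, some weight of length $k_j$ belongs to $\Delta$, and $S_n$-invariance then gives every weight of length $k_j$, in particular $\gamma_j$, in $\Delta$. With that substitution your proof is complete and matches the paper's.
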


\begin{proof}
    Since $k, k_i\in L$, and $\Delta$ is $S_n$-invariant,  the weights $\gamma_1:=\alpha_1 + \cdots+ \alpha_{k_1}$, $\gamma_2:=\alpha_{k_1+1} + \cdots+ \alpha_{k_1+k_2}$ and so on,  and the sum $\gamma_1 + \cdots+ \gamma_p =\alpha_1 + \cdots+ \alpha_k$ are in $\Delta$.    
    Let us prove that $T$ is unique. Assume that there exists another such primitive monomial $T' =t_{i'_1}^{\gamma'_1} \cdots t_{i'_p}^{\gamma'_p}$. The group $S_n$ acts on the fibers $\phi^{-1}(k)\cap \Delta$, where $k\in L$, transitively. Since $T, T'$ are multiplicity-free, we can find $s\in S_n$ such that $s\cdot T= T'$, hence $T'=T$, compare with Lemma \ref{lem any orbit contain a primitive monomial}. 
\end{proof}

From Lemma \ref{lem any orbit contain a primitive monomial}, Lemma \ref{lem S-orbit is a sum of s-orbits of ptimitives}, Lemma \ref{lem primitive T = p*( ...)} and Lemma \ref{lem xi ..xi we can construc a primitive} it follows.

\begin{proposition}\label{prop bijection of sets}
We have a bijection between the set of non zero sums $\sum\limits_{s\in S_n} s\cdot T$, where $T=t_{i_1}^{\gamma_1} \cdots t_{i_p}^{\gamma_p} $ is a primitive monomial, and the set of non zero monomials $\xi^{k_1}_{i_1} \cdots \xi^{k_p}_{i_p}$ as in Lemma \ref{lem xi ..xi we can construc a primitive}. 
The bijection is given by 
$$
\sum\limits_{s\in S_n} s\cdot T \longmapsto T=t_{i_1}^{\gamma_1} \cdots t_{i_p}^{\gamma_p} \longmapsto \xi^{\sharp \gamma_1}_{i_1} \cdots \xi^{\sharp \gamma_p}_{i_p}.
$$
Further, the map $\pp^*: (\mcO_{\mcU})_k \to (\mcO_{\mcV}^{S_n})_k$, where $k\in L$, is a bijection. 
\end{proposition}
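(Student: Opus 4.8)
The plan is to assemble the four preceding lemmas into two mutually inverse maps between the relevant index sets, and then to upgrade the resulting bijection to an isomorphism of modules over the base functions, from which the last assertion follows degree by degree.

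First I would fix notation for the two sets: let $\mathcal{A}$ be the set of nonzero orbit sums $\sum_{s\in S_n}s\cdot T$ with $T$ a primitive monomial, and let $\mathcal{B}$ be the set of nonzero monomials $\xi^{k_1}_{i_1}\cdots\xi^{k_p}_{i_p}$ with $0<k_1\le\cdots\le k_p$ and $i_j\le i_{j+1}$ whenever $k_j=k_{j+1}$. By the uniqueness clause of Lemma~\ref{lem any orbit contain a primitive monomial}, a nonzero orbit sum determines the primitive monomial it comes from; so $\mathcal{A}$ is canonically indexed by the primitive monomials with nonzero orbit sum, and $\sum_s s\cdot T\mapsto T$ is well defined. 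I would then define $\Phi\colon\mathcal{A}\to\mathcal{B}$ as the composite $\sum_s s\cdot T\mapsto T=t^{\gamma_1}_{i_1}\cdots t^{\gamma_p}_{i_p}\mapsto\xi^{\sharp\gamma_1}_{i_1}\cdots\xi^{\sharp\gamma_p}_{i_p}$: the output lies in $\mathcal{B}$ because $T$ being primitive forces the lengths and the indices to be ordered as required, and Lemma~\ref{lem primitive T = p*( ...)} guarantees that this $\xi$-monomial is nonzero exactly when $\sum_s s\cdot T\ne0$. Conversely $\Psi\colon\mathcal{B}\to\mathcal{A}$ sends $\xi^{k_1}_{i_1}\cdots\xi^{k_p}_{i_p}$ to the orbit sum of the unique primitive monomial with $\sharp\gamma_j=k_j$ and the same indices, which exists and is unique by Lemma~\ref{lem xi ..xi we can construc a primitive}, and which is again nonzero by Lemma~\ref{lem primitive T = p*( ...)}. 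That $\Psi\circ\Phi=\id$ and $\Phi\circ\Psi=\id$ is immediate from the uniqueness statements: from a $\xi$-monomial one recovers the monomial with the same exponents and indices, and from $\sum_s s\cdot T$ one recovers the unique primitive monomial with those lengths and indices, namely $T$ itself. This proves the first assertion and identifies the bijection with the displayed composite.

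For the final statement I would show that $\pp^*$ is an isomorphism of graded modules over the base functions, read in each degree $k\in L$. First, $\pp^*$ maps $\mcO_{\mcU}$ into $\mcO_{\mcV}^{S_n}$ and respects the grading: $\pp^*(x_i)=y_i$ and $\pp^*(\xi^k_{j_k})=\sum_{\sharp\delta=k}t^{\delta}_{j_k}$ are $S_n$-invariant because $S_n$ permutes the weights of length $k$, and both sides sit in the degree-$k$ piece. Next, $(\mcO_{\mcU})_k$ is free over $\mcF_{\mcU_0}$ with basis the nonzero ordered $\xi$-monomials of degree $k$, that is the degree-$k$ part of $\mathcal{B}$; and, applying Lemma~\ref{lem S-orbit is a sum of s-orbits of ptimitives} to a homogeneous invariant and noting that its decomposition respects the length grading, $(\mcO_{\mcV}^{S_n})_k$ is free over $\mcF_{\mcV_0}\cong\mcF_{\mcU_0}$ with basis the nonzero orbit sums of primitive monomials of length $k$, that is the degree-$k$ part of $\mathcal{A}$. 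Finally, by Lemma~\ref{lem primitive T = p*( ...)} each $m$ in the first basis satisfies $\pp^*(m)=c_m\sum_s s\cdot T_m$ with $c_m\in\K$ nonzero, where $m\mapsto\sum_s s\cdot T_m$ is exactly the bijection of index sets constructed above; hence $\pp^*$ carries a module basis bijectively onto nonzero scalar multiples of a module basis, so it is bijective in degree $k$.

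I expect the only genuinely delicate point to be the well-definedness of $\Phi$ at the level of \emph{sets}, i.e.\ that distinct primitive monomials with nonzero orbit sum have distinct orbit sums; this is precisely the linear independence packaged in Lemma~\ref{lem any orbit contain a primitive monomial} (equivalently, in the uniqueness clause of Lemma~\ref{lem S-orbit is a sum of s-orbits of ptimitives}). The remaining work — that the length grading is compatible with all the constructions, and that the two structure sheaves are free over the base functions on the bases above — is routine bookkeeping.
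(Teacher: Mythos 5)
Your proposal is correct and follows essentially the same route as the paper: the first assertion is exactly the assembly of Lemmas \ref{lem any orbit contain a primitive monomial}, \ref{lem primitive T = p*( ...)} and \ref{lem xi ..xi we can construc a primitive} into mutually inverse maps, and the bijectivity of $\pp^*$ in each degree is obtained, as in the paper's proof, by matching the monomial basis of $(\mcO_{\mcU})_k$ over the base functions with the basis of orbit sums of primitive monomials for $(\mcO_{\mcV}^{S_n})_k$ coming from the unique decomposition of Lemma \ref{lem S-orbit is a sum of s-orbits of ptimitives}. Your explicit verification of well-definedness and of the scalar factors from Lemma \ref{lem primitive T = p*( ...)} only spells out details the paper leaves implicit.
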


\begin{proof}
For $k=0$, the statement holds. Let $k>0$.
   We put $ K=(k_1,\ldots, k_s)$ and $I=(i_{k_1},\ldots, i_{k_s})$. We note that any homogeneous function in $\mcO_{\mcU}$ of weight $k\in L$ has the following form
    \begin{align*}
        f= \sum_{k_1+ \cdots  + k_s =k} f_{K}^{I}(x_i) \xi^{k_1}_{i_{k_1}} \cdots \xi^{k_s}_{i_{k_s}}, \quad 0 <k_1\leq \cdots\leq k_s, \,\, k_j\in L\setminus \{0\}.
    \end{align*} 
    Such functions are in bijection with $\Z$-homogeneous polynomials of weight $k$ of the form (\ref{eq decomposition of an invariant polynomial}).  
\end{proof}

Proposition  \ref{prop bijection of sets} is related to the classical Chevalley–Shephard–Todd Theorem. Indeed, if  for example $\Delta= \{0, \alpha_1, \ldots, \alpha_n\}$ with $|\alpha_i|=\bar 0$, $\dim V_{\alpha_i}=1$ for any $i>0$ and $\dim V_{0}=0$, then Proposition  \ref{prop bijection of sets} is a consequence  of Chevalley–Shephard–Todd Theorem for the group $S_n$. In this case, graded functions are generated by even variables $t^{\alpha_i}$. And it is known  (a particular case of Chevalley–Shephard–Todd Theorem) that the algebra of symmetric polynomials with rational coefficients equals the rational polynomial ring $\mathbb Q [p_1, \ldots, p_n]$, where 
\begin{align*}
    &p_1 = t^{\alpha_1}+ \cdots+ t^{\alpha_n};\\
    &p_2 = (t^{\alpha_1})^2 + \cdots+ (t^{\alpha_n})^2;\\
    &\ldots\\
    &p_k = (t^{\alpha_1})^k + \cdots+ (t^{\alpha_n})^k.
\end{align*}
are the power sum symmetric polynomials. Hence, the algebra of symmetric polynomials modulo multiplicity is generated by $p_1$.

\section{Symmetric multiplicity-free manifolds}\label{sec Symmetric multiplicity-free manifolds}

First of all, let us define a symmetric multiplicity-free domain. Let $\Delta\subset \Delta_n$ be $S_n$-invariant and $\mcV$ be a multiplicity-free domain of type $\Delta$ as in Remark \ref{rem any S-invariant mult free is c covering}. Recall that $\mcV$ has local coordinates $(y_i,t_{j_{\delta}}^{\delta})_{\delta\in\Delta\setminus
\{0\}}$ and the following action is defined $s\cdot t_j^{\delta} = t_j^{s\cdot \delta}$, where $s\in S_n$. This implies that we have an action on the structure sheaf $\mcO_{\mcV}$. (Note that the functions of weight $0$ are stable under this action.) 
An element $F\in \mcO_{\mcV}$ is called $S_n$-invariant, if it is $S_n$-invariant in the usual sense, that is $s\cdot F=F$. Denote by $\mcO_{\mcV}^{S_n}$ the subsheaf of $S_n$-invariant functions. More precisely, this subsheaf is defined by
$$
V \mapsto [\mcO_{\mcV}(V)]^{S_n}, 
$$
where $V$ is an open subset in $\mcV_0$. A domain $\mcV$ with a $S_n$-action is called symmetric.  

Let $\mcV_1,\mcV_2$ be two symmetric domains and $\Phi: \mcV_1\to \mcV_2$ be a morphism. The morphism $\Phi$ is called $S_n$-invariant if 
$$
s\circ \Phi^* = \Phi^*\circ s
$$ for any $s\in S_n$. A multiplicity-free manifold $\mcM$ is called symmetric if we can cover $\mcM$ with multiplicity-free domains $\mcV'_{\lambda}$ such that there exist isomorphisms $\phi_{\lambda}:\mcV'_{\lambda} \to \mcV_{\lambda}$, where any $\mcV_{\lambda}$ is symmetric, so that compositions $\phi_{\mu}\circ \phi^{-1}_{\lambda}$ are $S_n$-invariant. In this case, we can define the subsheaf $\mcO_{\mcM}^{S_n} \subset \mcO_{\mcM}$ of $S_n$-invariant elements. This atlas $\{\mcV'\}$ is called symmetric. Two symmetric atlases are called equivalent if the transition functions between local charts are $S_n$-invariant. The union of all the charts of equivalent atlases is called a symmetric structure on $\mcM$.  

A morphism $\Phi: \mcM_1\to\mcM_2$  is called symmetric if it is $S_n$-invariant in $S_n$-invariant local charts.  This definition is independent of the choice between equivalent atlases.

\section{Equivalence of categories of symmetric multiplicity-free manifolds and graded manifolds}

\subsection{Symmetric multiplicity-free domains and multiplicity-free coverings}

We start with some properties of lifts of graded functions. Let $\pp:\mcV\to \mcU$ be a multiplicity-free covering of type $\Delta$ of a graded domain $\mcU$ of type $L=\Delta/S_n$ with local coordinates as in Section \ref{sec inv polynim}.  

\begin{remark}\label{rem mcV is symmetric}
In $\mcO_{\mcV}$ there is a natural action of the group $S_n$, see Sections \ref{sec inv polynim} and \ref{sec Symmetric multiplicity-free manifolds}. Therefore, any multiplicity-free covering $\mcV$ of a graded domain $\mcU$ is a symmetric multiplicity-free domain. 
\end{remark}

\begin{lemma}\label{lem s circ p = p}
Let $f\in \mcO_{\mcU}$. Then $s\cdot \pp^*(f) = \pp^*(f)$. 
\end{lemma}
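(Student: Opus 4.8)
The plan is to verify the identity $s\cdot\pp^*(f)=\pp^*(f)$ directly on the generators of $\mcO_{\mcU}$ and then extend by algebra. Since $\mcV$ is a multiplicity-free domain, every element of $\mcO_{\mcU}$ is a polynomial in the odd/graded coordinates $\xi^k_{j_k}$ with functional coefficients in the even coordinates $x_i$, so it suffices to check the claim on the coordinate functions $x_i$ and $\xi^k_{j_k}$, using that $\pp^*$ is an algebra homomorphism and that the $S_n$-action on $\mcO_{\mcV}$ is by algebra automorphisms fixing weight-$0$ elements.

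First I would treat the base coordinates: by Formula (\ref{eq covering map pp}) we have $\pp^*(x_i)=y_i$, which has weight $0$, and every $s\in S_n$ fixes weight-$0$ functions pointwise (see Remark \ref{rem any S-invariant mult free is c covering} and Section \ref{sec Symmetric multiplicity-free manifolds}), so $s\cdot\pp^*(x_i)=y_i=\pp^*(x_i)$. The same applies to any functional coefficient $A(x_i)$ appearing in the polynomial expression of $f$. Next I would treat the graded coordinates: again by (\ref{eq covering map pp}),
$$
\pp^*(\xi^k_{j_k}) = \sum_{\sharp\delta=k} t^{\delta}_{j_k},
$$
the sum over all $\delta\in\Delta$ of length $k$. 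Applying $s\in S_n$ and using the definition of the action $s\cdot t^{\delta}_{j}=t^{s\cdot\delta}_{j}$ from Section \ref{sec inv polynim}, we get
$$
s\cdot\pp^*(\xi^k_{j_k}) = \sum_{\sharp\delta=k} t^{s\cdot\delta}_{j_k} = \sum_{\sharp\delta'=k} t^{\delta'}_{j_k} = \pp^*(\xi^k_{j_k}),
$$
where the middle equality holds because $s$ permutes the set $\{\delta\in\Delta\mid\sharp\delta=k\}$ bijectively (lengths are $S_n$-invariant, and $\Delta$ is $S_n$-invariant). Finally, since $s$ acts as a $\Z$-graded algebra automorphism of $\mcO_{\mcV}$ and $\pp^*$ is an algebra homomorphism, the identity on generators propagates to all of $\mcO_{\mcU}$: writing $f=\sum A_\sigma(x_i)\,\xi^{k_1}_{j_1}\cdots\xi^{k_q}_{j_q}$ locally and applying $s$ termwise gives $s\cdot\pp^*(f)=\pp^*(f)$.

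There is no real obstacle here; the only point requiring a word of care is that elements of $\mcO_{\mcU}$ may be \emph{infinite} sums of homogeneous components (as noted in Section \ref{sec graded manifolds}), so strictly speaking one argues homogeneous-component by homogeneous-component — each component is a genuine polynomial expression as above, the identity holds on each, and hence on the (possibly infinite) sum since the $S_n$-action and $\pp^*$ both respect the $\Z^n$-grading and thus act componentwise. This is the step I would state explicitly to make the argument airtight.
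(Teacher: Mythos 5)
Your proof is correct and is exactly the argument the paper intends: its one-line proof ("a consequence of (\ref{eq covering map pp})") is precisely your verification that $\pp^*(x_i)=y_i$ is fixed and that $s$ permutes the set $\{\delta\in\Delta\mid\sharp\delta=k\}$ in the sum $\pp^*(\xi^k_{j_k})=\sum_{\sharp\delta=k}t^{\delta}_{j_k}$, extended multiplicatively. You have simply written out the details (including the harmless remark about infinite sums of homogeneous components) that the paper leaves implicit.
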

\begin{proof} This is a consequence of (\ref{eq covering map pp}). 
\end{proof}

\begin{lemma}\label{lem lift of a morphism is symmetric}
    Let $\pp_i:\mcV_i\to \mcU_i$, where $i=1,2$,  be multiplicity-free coverings of type $\Delta$ and $\phi: \mcU_1\to \mcU_2$ be a morphism of graded domains of type $L$. Let $\Phi: \mcV_1\to \mcV_2$ be a lift of $\phi$. Then for any $s\in S_n$ we have
    $$s\circ \Phi\circ s^{-1} =  \Phi.$$
\end{lemma}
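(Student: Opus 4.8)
The statement asserts that the multiplicity-free lift $\Phi:\mcV_1\to\mcV_2$ of a morphism $\phi:\mcU_1\to\mcU_2$ is $S_n$-equivariant, i.e.\ $s\circ\Phi\circ s^{-1}=\Phi$ for every $s\in S_n$. The natural strategy is a uniqueness argument: by Theorem \ref{theor univ property domain}, the lift $\Phi$ is characterized as \emph{the unique} multiplicity-free morphism making the diagram (\ref{eq univ prop domain general}) commute with $\psi=\phi\circ\pp_1$. So it suffices to show that $s\circ\Phi\circ s^{-1}$ is also a lift of $\phi$, and then uniqueness forces equality.

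First I would make precise that $s\in S_n$ acts as a $\Z$-graded automorphism of the multiplicity-free domains $\mcV_1$ and $\mcV_2$, sending $t_j^\delta\mapsto t_j^{s\cdot\delta}$ (Remark \ref{rem any S-invariant mult free is c covering} and Remark \ref{rem mcV is symmetric}); in particular $s:\mcV_i\to\mcV_i$ is an isomorphism of multiplicity-free manifolds of type $\Delta$, so the composition $s\circ\Phi\circ s^{-1}:\mcV_1\to\mcV_2$ is again a morphism of multiplicity-free manifolds of type $\Delta$. Next I would check the crucial compatibility with the covering projections: I claim $\pp_2\circ(s\circ\Phi\circ s^{-1})=\phi\circ\pp_1$. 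The key ingredient is Lemma \ref{lem s circ p = p}, which says $s\cdot\pp_i^*(f)=\pp_i^*(f)$ for every $f\in\mcO_{\mcU_i}$, equivalently $\pp_i\circ s=\pp_i$ as maps $\mcV_i\to\mcU_i$ (this is immediate from Formula (\ref{eq covering map pp}): the sum $\sum_{\sharp\delta=k}t_j^\delta$ is visibly $S_n$-invariant). Using $\pp_2\circ s=\pp_2$, then $\pp_2\circ\Phi=\phi\circ\pp_1$ (the defining property of the lift), then $\pp_1\circ s^{-1}=\pp_1$, we compute
$$
\pp_2\circ(s\circ\Phi\circ s^{-1})=(\pp_2\circ s)\circ\Phi\circ s^{-1}=\pp_2\circ\Phi\circ s^{-1}=\phi\circ\pp_1\circ s^{-1}=\phi\circ\pp_1.
$$
Hence $s\circ\Phi\circ s^{-1}$ makes the lifting diagram commute for $\psi=\phi\circ\pp_1$.

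Finally, I would invoke the uniqueness clause of Theorem \ref{theor univ property domain} (equivalently, the uniqueness of the multiplicity-free lift in Theorem \ref{theo covering lift of psi graded to Psi mult free}): since both $\Phi$ and $s\circ\Phi\circ s^{-1}$ are multiplicity-free morphisms $\mcV_1\to\mcV_2$ of type $\Delta$ satisfying $\pp_2\circ(-)=\phi\circ\pp_1$, they must coincide, which is exactly $s\circ\Phi\circ s^{-1}=\Phi$. I do not expect any serious obstacle here; the only point requiring a little care is to verify at the level of structure sheaves that $s$ really is a morphism of multiplicity-free domains and that $\pp_i\circ s=\pp_i$ holds as an identity of ringed-space morphisms (not merely on generators), but both follow directly from the explicit coordinate formulas (\ref{eq covering map pp}) together with the fact that a morphism of graded domains is determined by the images of local coordinates, which all have multiplicity-free weight.
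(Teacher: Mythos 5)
Your proof is correct and follows essentially the same route as the paper: conjugate $\Phi$ by $s$, use Lemma \ref{lem s circ p = p} to check that $s\circ\Phi\circ s^{-1}$ is again a lift of $\phi$, and conclude by uniqueness of the lift. One small nuance: $s$ itself is only a $\Z$-graded automorphism (it permutes the $\Z^n$-weights, so it is not a type-$\Delta$ morphism on its own); what is needed, and what the paper explicitly notes, is that the conjugate $s\circ\Phi^*\circ s^{-1}$ is $\Z^n$-graded, which is immediate since the permutation of weights is undone.
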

\begin{proof}
     Let $s\in S_n$. Then $s\circ \Phi\circ s^{-1}$ is also a lift of $\phi$. Indeed, the morphism $\Phi$ is a unique morphism  such that  we have $ \Phi^*\circ\pp_2^* = \pp_1^* \circ \phi^* $. Further, we have for any $s\in S_n$ using Lemma \ref{lem s circ p = p}
\begin{align*}
s\circ\Phi^*\circ\pp_2^* = s\circ \pp_1^* \circ \phi^*;\quad
s\circ\Phi^*\circ s^{-1}\circ\pp_2^* = \pp_1^* \circ \phi^*. 
\end{align*}
Furthermore, $s\circ\Phi^*\circ s^{-1}$ is $\Z^n$-graded. 
Hence, $s\circ\Phi^*\circ s^{-1}$ is also a lift of $\phi$. 
 Since the lift is unique, we get $s\circ \Phi\circ s^{-1} =  \Phi$, or in other words, any lift of a graded morphism is symmetric. 
\end{proof}

Let us prove the following statement. 

\begin{theorem}\label{theor Phi covering symmetric hence exists phi graded}
Let $\mcV_i$ be two multiplicity-free domains of type $\Delta$ as in Remark  \ref{rem any S-invariant mult free is c covering}, where $\Delta$ is $S_n$-invariant, and let $\mcU_i$ be two graded domains constructed for $\mcV_i$ as in Remark  \ref{rem any S-invariant mult free is c covering}.  Consider an $S_n$-invariant morphism $\Phi: \mcV_1\to \mcV_2$. Then there exists a unique morphism of graded domains $\phi:\mcU_1\to \mcU_2$ such that its multiplicity-free lift is $\Phi$.  
\end{theorem}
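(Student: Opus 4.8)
The plan is to construct $\phi$ directly on local coordinates and then check that it is well-defined, graded, and has $\Phi$ as its multiplicity-free lift, using the dictionary between $S_n$-invariant multiplicity-free functions on $\mcV_i$ and graded functions on $\mcU_i$ established in Section~\ref{sec inv polynim}, in particular Proposition~\ref{prop bijection of sets}. Write the local coordinates of $\mcV_i$ as $(y^{(i)}_a, t^{\delta}_{j_\delta})$ and of $\mcU_i$ as $(x^{(i)}_a, \xi^k_{j_k})$, with $\pp_i$ given by Formulas~(\ref{eq covering map pp}). The morphism $\Phi$ is determined by the functions $\Phi^*(y^{(2)}_c)=G_c\in(\mcO_{\mcV_1})_0$ and $\Phi^*(t^{\delta}_{j_\delta})=G^{\delta}_{j_\delta}\in(\mcO_{\mcV_1})_{\delta}$.

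First I would use $S_n$-invariance of $\Phi$ to pin down the structure of these components. Since $y^{(2)}_c$ is $S_n$-fixed and $\Phi$ is $S_n$-invariant, each $G_c$ is $S_n$-invariant of weight $0$, hence a bona fide function of the base, so we may set $\phi^*(x^{(2)}_c):=G_c$ (identifying weight-$0$ functions on $\mcV_1$ and $\mcU_1$). For the odd-weight coordinates, fix $k\beta\in L\setminus\{0\}$; I want to define $\phi^*(\xi^k_{j_k})$. Consider the $S_n$-invariant function $\pp_1^*(\xi^k_{j_k})\cdots$ — rather, consider $\sum_{\sharp\delta=k}\Phi^*(t^{\delta}_{j_k}) = \Phi^*\big(\sum_{\sharp\delta=k}t^{\delta}_{j_k}\big) = \Phi^*(\pp_2^*(\xi^k_{j_k}))$, which is $S_n$-invariant by Lemma~\ref{lem s circ p = p} and has weight $k\beta$ after passing to the length grading. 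By Proposition~\ref{prop bijection of sets}, the map $\pp_1^*:(\mcO_{\mcU_1})_k\to(\mcO_{\mcV_1}^{S_n})_k$ is a bijection, so there is a unique $\phi^*(\xi^k_{j_k})\in(\mcO_{\mcU_1})_k$ with $\pp_1^*(\phi^*(\xi^k_{j_k})) = \Phi^*(\pp_2^*(\xi^k_{j_k}))$; moreover $S_n$-invariance of $G_c$ forces $\phi^*(\xi^k_{j_k})$ to actually be a polynomial in the $\xi^{(1)}$'s with functional coefficients, i.e.\ an element of $(\mcO_{\mcU_1})_k$, as required. The base-map condition $(G_1(u),\dots)\in(\mcU_2)_0$ is inherited from $\Phi$, so by the characterization of morphisms of graded domains this data defines a morphism $\phi:\mcU_1\to\mcU_2$.

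Next I would verify that $\Phi$ is the multiplicity-free lift of $\phi$, i.e.\ that $\Phi^*\circ\pp_2^* = \pp_1^*\circ\phi^*$. By construction this holds on the coordinates $\xi^k_{j_k}$ and on base functions, and both sides are algebra morphisms commuting with the gradings, so they agree on all of $\mcO_{\mcU_2}$; hence $\Phi$ coincides with the lift of $\phi$ guaranteed by Theorem~\ref{theo covering lift of psi graded to Psi mult free} — but wait, I must check this lift really recovers $\Phi$ and not merely agrees with it after composing with $\pp_2^*$. Here is the crux: I need that $\pp_2^*$ is injective on the relevant components, equivalently that the lift $\Phi$ of $\phi$ is \emph{uniquely} characterized among $S_n$-invariant morphisms by the relation $\Phi^*\circ\pp_2^* = \pp_1^*\circ\phi^*$. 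This follows because the multiplicity-free covering $\mcV_2$ is generated (as an algebra sheaf, locally) by the $t^{\delta}_{j_\delta}$, and an $S_n$-invariant morphism is determined by $\Phi^*(t^\delta_{j_\delta})$ for one representative $\delta$ in each orbit, which in turn equals the weight-$\delta$ component of $\Phi^*(\pp_2^*(\xi^{\sharp\delta}_{j_{\sharp\delta}})) = \pp_1^*(\phi^*(\xi^{\sharp\delta}_{j_{\sharp\delta}}))$ — exactly the formula defining the lift in Section~\ref{sec multiplicity-free covering of a graded domain}. The other orbit representatives are then determined by $S_n$-equivariance (Remark~\ref{rem mcV is symmetric}), matching Lemma~\ref{lem lift of a morphism is symmetric}.

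Finally, uniqueness of $\phi$: if $\phi'$ is another morphism of graded domains with lift $\Phi$, then $\pp_1^*\circ(\phi')^* = \Phi^*\circ\pp_2^* = \pp_1^*\circ\phi^*$, and since $\pp_1^*$ is injective on each graded component $(\mcO_{\mcU_1})_k$ by Proposition~\ref{prop bijection of sets}, we get $(\phi')^* = \phi^*$, hence $\phi'=\phi$. The main obstacle I anticipate is the bookkeeping in the crux step — namely showing cleanly that an $S_n$-invariant morphism of multiplicity-free domains is recovered from its values on a single orbit representative per weight-class, so that the candidate $\phi$ produced via Proposition~\ref{prop bijection of sets} genuinely lifts back to $\Phi$ on the nose rather than up to the choice of representatives; this is really a consequence of the fact, noted after Formula~(\ref{eq covering map pp}), that the covering data and the $S_n$-action together determine everything, but it deserves to be spelled out.
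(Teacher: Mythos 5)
Your construction matches the paper's own proof: you define $\phi^*$ on local coordinates as the unique $\pp_1^*$-preimage (via the bijection of Proposition~\ref{prop bijection of sets}) of the $S_n$-invariant, length-homogeneous function $\Phi^*\bigl(\pp_2^*(\xi^k_{j_k})\bigr)$, obtain $\pp_1^*\circ\phi^*=\Phi^*\circ\pp_2^*$ by construction, and get uniqueness of $\phi$ from injectivity of $\pp_1^*$, exactly as in the paper. The only remark is that your ``crux'' detour is unnecessary: once $\Phi^*\circ\pp_2^*=\pp_1^*\circ\phi^*$ is established, the uniqueness clause of the universal property (Theorem~\ref{theor univ property domain}, equivalently Theorem~\ref{theo covering lift of psi graded to Psi mult free}) already forces $\Phi$ to coincide with the multiplicity-free lift of $\phi$ on the nose, so the orbit-representative argument is just a re-derivation of that uniqueness.
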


\begin{proof}
We use Proposition \ref{prop bijection of sets}. For any $k\in L$ we have
$$
\begin{tikzcd}
	(\mcO_{\mcV_2}^{S_n})_k \arrow{r}{\Phi^*}  & (\mcO_{\mcV_1}^{S_n})_k   \\
(\mcO_{\mcN_2})_k \arrow[swap]{u}{\pp_2^*} \arrow{r}{\exists!\phi^*} & (\mcO_{\mcN_1})_k\arrow[swap]{u}{\pp_1^*}
	\end{tikzcd}.
	$$ 
Since two up arrows are isomorphisms, we can define $\phi^*$ on local coordinates of degree $k$. 
\end{proof}

\subsection{Symmetric multiplicity-free manifolds and multiplicity-free coverings}

We start this section with the following theorem. 

\begin{theorem}\label{theor mcP is symmetric}
    Let $\mcN$ be a graded manifold of type $L=\Delta/S_n$ and $\pp:\mcP\to \mcN$ be its multiplicity-free covering of type $\Delta$. Then $\mcP$ is a symmetric multiplicity-free manifold. 

    Further let $\pp :\mcP\to  \mcN$ and $\pp':\mcP'\to \mcN'$ be multiplicity-free coverings of type $\Delta$ of graded manifolds $\mcN$ and $\mcN'$ of type $L$, respectively. Let $\phi: \mcN \to \mcN'$ be a morphism of graded manifolds. By Theorem \ref{theo covering lift of psi graded to Psi mult free general} there exists a unique multiplicity-free lift $\Phi: \mcP \to \mcP'$ of type $\Delta$. Then the morphism $\Phi$ is $S_n$-invariant.
\end{theorem}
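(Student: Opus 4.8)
The plan is to reduce the global statement to the local one, which is already essentially contained in the earlier results. For the first assertion, recall from the construction of $\mcP$ in Section \ref{sec multiplicity-free covering of a graded manifold} that $\mcP$ is glued from the multiplicity-free coverings $\mcV_i$ of the charts $\mcU_i$ of $\mcN$, with transition functions $\Psi_{ij}$ the multiplicity-free lifts of the transition functions $\psi_{ij}$ of $\mcN$. By Remark \ref{rem mcV is symmetric}, each $\mcV_i$ carries a natural $S_n$-action, hence is a symmetric multiplicity-free domain. So it remains to check that the atlas $\{\mcV_i\}$ is symmetric in the sense of Section \ref{sec Symmetric multiplicity-free manifolds}, i.e. that the transition maps $\Psi_{ij}$ are $S_n$-invariant morphisms. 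But $\Psi_{ij}$ is the lift of the graded morphism $\psi_{ij}$, so Lemma \ref{lem lift of a morphism is symmetric} gives $s\circ \Psi_{ij}\circ s^{-1} = \Psi_{ij}$ for every $s\in S_n$, which is exactly $S_n$-invariance. Therefore $\{\mcV_i\}$ is a symmetric atlas and $\mcP$ is a symmetric multiplicity-free manifold.

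For the second assertion, I would argue chart by chart. Fix symmetric atlases $\{\mcV_i\}$ of $\mcP$ and $\{\mcV'_a\}$ of $\mcP'$ lying over atlases $\{\mcU_i\}$ of $\mcN$ and $\{\mcU'_a\}$ of $\mcN'$, chosen compatibly so that $\phi$ maps (a refinement of) $\mcU_i$ into some $\mcU'_a$; write $\phi_i : \mcU_i \to \mcU'_a$ for the restricted graded morphism. By the construction of the lift $\Phi$ in Theorem \ref{theo covering lift of psi graded to Psi mult free general}, the restriction $\Phi|_{\mcV_i}$ is precisely the multiplicity-free lift of $\phi_i$ in the sense of Theorem \ref{theo covering lift of psi graded to Psi mult free} (this is the local statement from which the global $\Phi$ was assembled). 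Now apply Lemma \ref{lem lift of a morphism is symmetric} to the local covering projections $\pp_i : \mcV_i \to \mcU_i$ and $\pp'_a : \mcV'_a \to \mcU'_a$ and the morphism $\phi_i$: it tells us that $s\circ (\Phi|_{\mcV_i}) \circ s^{-1} = \Phi|_{\mcV_i}$ for every $s\in S_n$, i.e. $\Phi|_{\mcV_i}$ is $S_n$-invariant. Since $\Phi$ is $S_n$-invariant in every chart of a symmetric atlas, $\Phi$ is a symmetric morphism by definition.

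The only real subtlety — and the step I expect to require the most care — is the bookkeeping in choosing the atlases: one must ensure the charts on $\mcN$ and $\mcN'$ can be taken fine enough that each $\phi(\mcU_i)$ lands inside a single $\mcU'_a$, and that the chosen coverings $\mcV_i$, $\mcV'_a$ are the ones produced by the canonical construction (so that Lemma \ref{lem lift of a morphism is symmetric} genuinely applies to $\Phi|_{\mcV_i}$). This is standard refinement of atlases, and the independence of the notion of symmetric morphism from the choice of equivalent symmetric atlas (noted at the end of Section \ref{sec Symmetric multiplicity-free manifolds}) guarantees the conclusion does not depend on these choices. Once that is set up, both parts follow immediately from Remark \ref{rem mcV is symmetric} and Lemma \ref{lem lift of a morphism is symmetric}, with no further computation.
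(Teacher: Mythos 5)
Your proposal is correct and follows essentially the same route as the paper's proof: symmetry of $\mcP$ from Remark \ref{rem mcV is symmetric} together with $S_n$-invariance of the transition lifts $\Psi_{ij}$ via Lemma \ref{lem lift of a morphism is symmetric}, and $S_n$-invariance of $\Phi$ because it is locally the lift of $\phi$ in each chart, again by Lemma \ref{lem lift of a morphism is symmetric}. The extra bookkeeping you describe about refining atlases is exactly the detail the paper leaves implicit, so there is no gap.
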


\begin{proof}
 The multiplicity-free covering $\mcP$ of type $\Delta$ was constructed in Section  \ref{sec multiplicity-free covering of a graded manifold}. By definition $\mcP$ can be covered by symmetric domains, see Remark \ref{rem mcV is symmetric}. Further the transition functions between these symmetric domains are $S_n$-invariant, see Lemma \ref{lem lift of a morphism is symmetric}. 
  Secondly,  the morphism $\Phi$ is $S_n$-invariant, since it is locally $S_n$-invariant, see Lemma \ref{lem lift of a morphism is symmetric}.  This completes the proof. 
\end{proof}

\begin{theorem}\label{theor any symmetric is a covering}
Let we have a symmetric multiplicity-free  manifold $\mcM$. 
Then $\mcM$ can be regarded as a covering of a certain graded manifold $\mcN$. 
\end{theorem}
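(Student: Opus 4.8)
The plan is to reconstruct the base graded manifold $\mcN$ locally on a symmetric atlas of $\mcM$ and then glue, using Theorem \ref{theor Phi covering symmetric hence exists phi graded} as the local input and Lemma \ref{lem lift of a morphism is symmetric} (together with Proposition \ref{prop psi to Psi is a functor domains}) to control the transition functions. First I would fix a symmetric atlas $\{\mcV'_{\lambda}\}$ of $\mcM$, with symmetric isomorphisms $\phi_{\lambda}:\mcV'_{\lambda}\to\mcV_{\lambda}$ onto symmetric multiplicity-free domains $\mcV_{\lambda}$ of type $\Delta$, such that the transition functions $\Phi_{\lambda\mu}:=\phi_{\lambda}\circ\phi_{\mu}^{-1}$ are $S_n$-invariant. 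By Remark \ref{rem any S-invariant mult free is c covering}, each $\mcV_{\lambda}$ is the multiplicity-free covering of a graded domain $\mcU_{\lambda}$ of type $L=\Delta/S_n$, with covering projection $\pp_{\lambda}:\mcV_{\lambda}\to\mcU_{\lambda}$, where $\mcU_{\lambda}$ has the same base as $\mcV_{\lambda}$ and graded coordinates as described there.

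Next I would produce the transition functions downstairs. Since $\Phi_{\lambda\mu}:\mcV_{\mu}\to\mcV_{\lambda}$ (restricted to the appropriate overlap) is an $S_n$-invariant morphism of symmetric multiplicity-free domains of type $\Delta$, Theorem \ref{theor Phi covering symmetric hence exists phi graded} yields a unique morphism of graded domains $\psi_{\lambda\mu}:\mcU_{\mu}\to\mcU_{\lambda}$ whose multiplicity-free lift is $\Phi_{\lambda\mu}$; in particular the square with $\pp_{\lambda},\pp_{\mu}$ commutes. The cocycle condition $\psi_{\lambda\mu}\circ\psi_{\mu\nu}\circ\psi_{\nu\lambda}=\id$ on triple overlaps then follows from the cocycle condition for $\{\Phi_{\lambda\mu}\}$ together with functoriality of the lift (Proposition \ref{prop psi to Psi is a functor domains}, in its version for coverings of domains): both $\psi_{\lambda\mu}\circ\psi_{\mu\nu}\circ\psi_{\nu\lambda}$ and $\id$ lift to $\Phi_{\lambda\mu}\circ\Phi_{\mu\nu}\circ\Phi_{\nu\lambda}=\id$, and the lift is unique by Theorem \ref{theo covering lift of psi graded to Psi mult free}, so the two downstairs morphisms agree. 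Hence the data $\{\mcU_{\lambda}\}$, $\{\psi_{\lambda\mu}\}$ glue to a graded manifold $\mcN$ of type $L$, and the local projections $\pp_{\lambda}$, being compatible with the $\psi_{\lambda\mu}$ and $\Phi_{\lambda\mu}$ by construction, glue to a morphism $\pp:\mcM\to\mcN$.

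It then remains to check that $\pp:\mcM\to\mcN$ really is a multiplicity-free covering in the sense of the definitions of Section \ref{sec multiplicity-free covering of a graded manifold}, i.e. that $\mcM$ is isomorphic to the covering $\mcP$ produced from $\mcN$ in Section \ref{sec multiplicity-free covering of a graded manifold}. By construction $\pp$ restricts over each $\mcU_{\lambda}$ to the model covering $\pp_{\lambda}:\mcV_{\lambda}\to\mcU_{\lambda}$, and the upstairs gluing data $\{\Phi_{\lambda\mu}\}$ are exactly the multiplicity-free lifts of the $\{\psi_{\lambda\mu}\}$; since $\mcP$ is glued from the same $\{\mcV_{\lambda}\}$ via the same lifts, the identity on each chart assembles (again using uniqueness of lifts on overlaps) to an isomorphism $\mcM\cong\mcP$ commuting with the projections. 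By Theorem \ref{theor univ pro graded and mult free manifold} this $\mcP$ satisfies the universal property, so $\mcM$ is, up to isomorphism, the multiplicity-free covering of the graded manifold $\mcN$.

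\textbf{Main obstacle.} The routine parts are the gluing bookkeeping; the genuine content is entirely concentrated in two earlier results, namely Theorem \ref{theor Phi covering symmetric hence exists phi graded} (an $S_n$-invariant morphism upstairs descends), which in turn rests on the bijectivity of $\pp^*$ on $S_n$-invariants (Proposition \ref{prop bijection of sets}), and the uniqueness of multiplicity-free lifts. The only real care needed in the argument itself is to verify that the symmetric structure on $\mcM$ is used in a well-defined way — i.e. that a different equivalent symmetric atlas produces an isomorphic $\mcN$ — which again reduces to uniqueness of lifts; I do not expect this to be hard, but it is the point most easily glossed over.
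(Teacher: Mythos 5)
Your proposal follows essentially the same route as the paper: cover $\mcM$ by symmetric charts, use Remark \ref{rem any S-invariant mult free is c covering} to view each chart as a covering of a graded domain, descend the $S_n$-invariant transition functions via Theorem \ref{theor Phi covering symmetric hence exists phi graded}, and obtain the cocycle condition downstairs so that the $\mcU_\lambda$ and $\psi_{\lambda\mu}$ glue to $\mcN$ with a well-defined projection. One small remark: in the cocycle step the equality $\psi_{\lambda\mu}\circ\psi_{\mu\nu}\circ\psi_{\nu\lambda}=\id$ follows from the uniqueness clause of Theorem \ref{theor Phi covering symmetric hence exists phi graded} (uniqueness of the graded morphism having a prescribed lift), not from the uniqueness of lifts in Theorem \ref{theo covering lift of psi graded to Psi mult free}, and this is exactly how the paper argues; your added final identification of $\mcM$ with the covering $\mcP$ of Section \ref{sec multiplicity-free covering of a graded manifold} is a harmless elaboration of the paper's concluding sentence.
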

\begin{proof}
To see this let us cover $\mcM$ with symmetric charts $\mcV_i$ as in Section \ref{sec Symmetric multiplicity-free manifolds}.
As we saw in Remark \ref{rem any S-invariant mult free is c covering}, any $\mcV_i$ is a multiplicity-free covering of a graded domain $\mcU_i$. Further if $\Psi_{ji}:\mcV_i\to \mcV_j$ are transition functions, which are $S_n$-invariant, then by Theorem \ref{theor Phi covering symmetric hence exists phi graded}, there exist unique morphisms $\psi_{ji}: \mcU_i\to \mcU_j$ such that $\psi_{ji}\circ \pp_i = \pp_j \circ\Psi_{ji}$.

Denote $\mcV_{ijk}:=\mcV_i\cap\mcV_j\cap \mcV_k$. Then $\mcV_{ijk}$ is a multiplicity-free covering of $\mcU_{ijk}:= \mcU_i\cap\mcU_j\cap \mcU_k$ of type $\Delta$ for the covering map $\pp_i: \mcV_{ijk}\to \mcU_{ijk}$. Now consider the composition 
$$
\Psi_{ij} \circ \Psi_{jk} \circ \Psi_{ki} = \id . 
$$
It is a $S_n$-invariant automorphism of $\mcV_{ijk}$. Hence, by Theorem \ref{theor Phi covering symmetric hence exists phi graded} there exists a unique graded automorphism $\psi_{ijk}$  of $\mcU_{ijk}$ commuting with $\pp_i$. Consider the following commutative diagram
$$
\begin{tikzcd}
\mcV_i \arrow{r}{\Psi_{ki}} \arrow[swap]{d}{\pp_i} & \mcV_k \arrow{r}{\Psi_{jk}} \arrow{d}{\pp_k} & \mcV_j \arrow{r}{\Psi_{ij}} \arrow{d}{\pp_j}  & \mcV_i \arrow{d}{\pp_i}\\
\mcU_i \arrow{r}{\psi_{ki}} & \mcU_k \arrow{r}{\psi_{jk}} & \mcU_j \arrow{r}{\psi_{ij}} & \mcU_i
\end{tikzcd}
$$
From one side  $\psi_{ijk} = \id$.  On the other hand, it is equal to $\psi_{ij} \circ \psi_{jk} \circ \psi_{ki}$. Since such a automorphism is unique, we get $\psi_{ij} \circ \psi_{jk} \circ \psi_{ki} =\id$. In other words, the data $\{\mcU_i\}$ and $\{\psi_{ij}\}$ define a graded manifold $\mcN$. The covering map $\pp|_{\mcU_i} = \pp_i$ is also well defined, as it commutes with the transition functions. 
\end{proof}

Let $\pp :\mcP\to  \mcN$ and $\pp':\mcP'\to \mcN'$ be multiplicity-free coverings of type $\Delta$ of graded manifolds $\mcN$ and $\mcN'$ of type $L=\Delta/ S_n$, respectively. 

\begin{theorem}\label{teor phi to Phi inhective}
Let $\phi_i: \mcN \to \mcN'$, $i=1,2$, be morphisms of graded manifolds with the same lift $\Phi$. Then $\phi_1=\phi_2$. Furthermore, if $\Psi:\mcP \to \mcP'$ is a symmetric morphism of symmetric multiplicity-free manifolds, then there exists a morphism $\psi: \mcN \to \mcN'$ of graded manifolds such that the lift of $\psi$ is $\Psi$.     
\end{theorem}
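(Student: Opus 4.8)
The plan is to prove the two assertions of Theorem~\ref{teor phi to Phi inhective} separately, using the local results from Section~\ref{sec inv polynim} together with the gluing arguments already developed for the covering $\pp:\mcP\to\mcN$.

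For the first assertion (injectivity of $\phi\mapsto\Phi$), I would argue locally and then glue. Fix compatible atlases $\{\mcU_i\}$ of $\mcN$, $\{\mcU'_a\}$ of $\mcN'$ and the induced atlases $\{\mcV_i\}$, $\{\mcV'_a\}$ of $\mcP$, $\mcP'$, with the covering maps $\pp_i:\mcV_i\to\mcU_i$ and $\pp'_a:\mcV'_a\to\mcU'_a$ given locally by Formulas~(\ref{eq covering map pp}). On each chart the lift $\Phi$ satisfies $\Phi^*\circ(\pp'_a)^* = \pp_i^*\circ\phi^*$ (suitably restricted). Since by Proposition~\ref{prop bijection of sets} each $\pp_i^*:(\mcO_{\mcU_i})_k\to(\mcO_{\mcV_i}^{S_n})_k$ is a bijection for every $k\in L$, the map $\phi^*$ is completely determined on local graded coordinates by $\Phi^*$: namely $\pp_i^*(\phi^*(\xi))$ is read off from $\Phi^*(\pp'_a)^*(\xi)$ and then $\phi^*(\xi)$ is recovered by the inverse of $\pp_i^*$. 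Hence $\phi_1^*=\phi_2^*$ on every chart, so $\phi_1=\phi_2$ globally. (One should note that the lifts here are automatically $S_n$-invariant by Lemma~\ref{lem lift of a morphism is symmetric}, so landing in $\mcO^{S_n}$ is legitimate.)

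For the second assertion (every symmetric morphism $\Psi$ comes from a graded morphism $\psi$), I would construct $\psi$ chart by chart and check compatibility, exactly mirroring the proof of Theorem~\ref{theor any symmetric is a covering}. On each pair of charts where $\Psi$ restricts to a morphism $\mcV_i\to\mcV'_a$, it is $S_n$-invariant by hypothesis, so by Theorem~\ref{theor Phi covering symmetric hence exists phi graded} there exists a unique morphism of graded domains $\psi_{ai}:\mcU_i\to\mcU'_a$ whose multiplicity-free lift is $\Psi|_{\mcV_i}$, i.e.\ $\psi_{ai}\circ\pp_i = \pp'_a\circ\Psi$. The uniqueness clause in Theorem~\ref{theor Phi covering symmetric hence exists phi graded} forces these local $\psi_{ai}$ to agree on overlaps: on $\mcU_i\cap\mcU_{i'}$ (pulled back appropriately), both $\psi_{ai}$ and $\psi_{ai'}$ are graded morphisms covered by the same $S_n$-invariant morphism $\Psi$, hence they coincide; similarly for different target charts one uses the compatibility of the transition functions $\psi_{ab}$ on $\mcN'$ with the $\Psi_{ab}$ on $\mcP'$ established in Theorem~\ref{theor mcP is symmetric}. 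Gluing the $\psi_{ai}$ yields a well-defined morphism $\psi:\mcN\to\mcN'$, and since the lifting construction of Theorem~\ref{theo covering lift of psi graded to Psi mult free general} is local and unique, the lift of $\psi$ is $\Psi$.

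The main obstacle I anticipate is bookkeeping rather than a conceptual difficulty: one must be careful that the various restrictions of $\Psi$ and $\Phi$ to the (generally smaller) open sets where both a source chart $\mcV_i$ and a target chart $\mcV'_a$ are relevant still determine morphisms of \emph{graded domains} (not merely of $\Z^n$-graded ringed spaces), and that the identity $\pp'_a\circ\Psi = \psi_{ai}\circ\pp_i$ holds on the correct domain of definition. Once one checks that the cocycle condition transports correctly under $\Phi\mapsto\phi$ via the uniqueness in Theorem~\ref{theor Phi covering symmetric hence exists phi graded}—which is precisely the argument already run in Theorem~\ref{theor any symmetric is a covering}—the gluing is automatic, and the bijection $\pp_i^*$ of Proposition~\ref{prop bijection of sets} does all the real work of producing $\psi^*$ out of $\Psi^*$.
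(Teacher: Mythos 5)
Your proposal is correct and follows essentially the same route as the paper: reduce to graded and multiplicity-free domains, apply Theorem \ref{theor Phi covering symmetric hence exists phi graded} (whose content is exactly the bijection of Proposition \ref{prop bijection of sets} that you invoke) to get local uniqueness and existence of $\psi$, and glue by that uniqueness. Your version merely spells out the chart-by-chart bookkeeping and the $S_n$-invariance of lifts (Lemma \ref{lem lift of a morphism is symmetric}) that the paper's brief proof leaves implicit.
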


\begin{proof}
    Let us prove the first statement. Without loss of generality, we may assume that $\mcN=\mcU$, $\mcN'=\mcU'$ are graded domains, and $\mcP=\mcV$, $\mcP'=\mcV'$ are multiplicity-free domains. By Theorem \ref{theor Phi covering symmetric hence exists phi graded}, we have $\phi_1=\phi_2$. Furthermore, again by Theorem \ref{theor Phi covering symmetric hence exists phi graded}, the morphism $\psi$ exists locally and in any chart it is unique. Hence $\psi$ is globally defined.    
\end{proof}

\subsection{Equivalence of the category of symmetric multiplicity-free manifolds of type $\Delta$ and graded manifolds of type $L=\Delta/S_n$}

Recall a definition of the equivalence of categories. 

\begin{definition}\label{de equivalence of cate} Two categories $\mathcal C$ and $\mathcal C'$ are called {\it equivalent} if there is a functor $\F:\mathcal C \to \mathcal C'$ such that:
\begin{itemize}
	\item $\F$ is full and faithful, that is, $Hom_{\mathcal C}(c_1,c_2)$ is in bijection with \newline $Hom_{\mathcal C'}(\F c_1, \F c_2)$.
	\item $\F$ is essentially surjective, this is for any $a\in \mathcal C'$ there exists $b\in \mathcal C$ such that $a$ is isomorphic to $\F(b)$. 
\end{itemize}
\end{definition}

Above it was shown that the correspondence: graded manifold $\mcN$ to its multiplicity-free covering $\mcP$ of type $\Delta$, see Proposition \ref{prop psi to Psi is a functor domains},  Section \ref{sec multiplicity-free covering of a graded manifold},  and a graded morphism $\phi$ to its multiplicity-free lift $\Phi$ of type $\Delta$, see Theorem \ref{theo covering lift of psi graded to Psi mult free general}, is a functor from the category of graded manifolds of type $L = \Delta/S_n$ to the category of symmetric multiplicity-free manifolds of type $\Delta$. We denote this functor by $\mathrm{Cov}$.

\begin{theorem}
    The functor $\mathrm{Cov}$ is an equivalence of the category of graded manifolds of type $L =\Delta/S_n$ and the category of symmetric multiplicity-free manifolds of type $\Delta$. 
\end{theorem}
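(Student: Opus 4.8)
The plan is to verify the three defining properties of an equivalence of categories from Definition~\ref{de equivalence of cate} for the functor $\mathrm{Cov}$, drawing on the structural results established above. First I would recall that $\mathrm{Cov}$ is indeed a well-defined functor: it sends a graded manifold $\mcN$ of type $L$ to its multiplicity-free covering $\mcP$ of type $\Delta$ (Section~\ref{sec multiplicity-free covering of a graded manifold}), and a morphism $\phi$ to its unique multiplicity-free lift $\Phi$, with functoriality following from Proposition~\ref{prop psi to Psi is a functor domains} and Theorem~\ref{theo covering lift of psi graded to Psi mult free general}; moreover $\mathrm{Cov}(\phi)$ lands in the symmetric morphisms by Theorem~\ref{theor mcP is symmetric}. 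So the only remaining work is to check that $\mathrm{Cov}$ is full, faithful, and essentially surjective.

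For \emph{faithfulness}, given two graded morphisms $\phi_1,\phi_2:\mcN\to\mcN'$ with $\mathrm{Cov}(\phi_1)=\mathrm{Cov}(\phi_2)$, I would invoke the first statement of Theorem~\ref{teor phi to Phi inhective} directly: equality of lifts forces $\phi_1=\phi_2$. For \emph{fullness}, I would take an arbitrary symmetric morphism $\Psi:\mcP\to\mcP'$ between the coverings and use the second statement of Theorem~\ref{teor phi to Phi inhective} to produce a graded morphism $\psi:\mcN\to\mcN'$ whose lift is $\Psi$, i.e.\ $\mathrm{Cov}(\psi)=\Psi$; combined with faithfulness, this gives the bijection $\Hom_{\mathcal C}(\mcN,\mcN')\cong\Hom_{\mathcal C'}(\mathrm{Cov}(\mcN),\mathrm{Cov}(\mcN'))$ required. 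For \emph{essential surjectivity}, I would start from an arbitrary symmetric multiplicity-free manifold $\mcM$ of type $\Delta$ and apply Theorem~\ref{theor any symmetric is a covering}: it exhibits $\mcM$ as a multiplicity-free covering of some graded manifold $\mcN$, and by the uniqueness up to isomorphism of the covering (Theorem~\ref{theo covering lift of psi graded to Psi mult free general}) we get $\mcM\simeq\mathrm{Cov}(\mcN)$ as symmetric multiplicity-free manifolds, which is what essential surjectivity demands.

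The one point that deserves a little care — and which I expect to be the main (mild) obstacle — is checking that the isomorphism $\mcM\simeq\mathrm{Cov}(\mcN)$ produced in the essential surjectivity step is itself a \emph{symmetric} isomorphism, not merely an isomorphism of multiplicity-free manifolds, so that it is genuinely an isomorphism in the category $\mathcal C'$. This follows because the covering map $\pp$ and the identification of charts in the proof of Theorem~\ref{theor any symmetric is a covering} are built from $S_n$-invariant data and lifts of graded morphisms, which are automatically symmetric by Lemma~\ref{lem lift of a morphism is symmetric}; hence the comparison isomorphism intertwines the $S_n$-actions. A second minor check is that the bijection on Hom-sets in the fullness/faithfulness step restricts correctly to symmetric morphisms on the target side — but this is exactly the content of Theorem~\ref{teor phi to Phi inhective}, whose statement already quantifies over symmetric $\Psi$. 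Assembling these observations, all three conditions of Definition~\ref{de equivalence of cate} hold, so $\mathrm{Cov}$ is an equivalence of categories, and as noted in the introduction this gives a conceptual explanation for why symmetric $n$-fold vector bundles are the correct objects: they are precisely the multiplicity-free coverings of graded manifolds, with deck group $\Deck(\chi,\Delta)\simeq S_n$ by Proposition~\ref{prop Dech group}.
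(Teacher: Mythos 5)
Your proposal is correct and follows essentially the same route as the paper: the paper's proof likewise cites Theorem \ref{theor mcP is symmetric} for the functor landing in symmetric multiplicity-free manifolds, Theorem \ref{theor any symmetric is a covering} for essential surjectivity, and Theorem \ref{teor phi to Phi inhective} for fullness and faithfulness. Your extra remark about the comparison isomorphism being symmetric is a reasonable elaboration of the same argument rather than a different approach.
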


\begin{proof}
    By Theorem \ref{theor mcP is symmetric} the functor $\mathrm{Cov}$ is a functor from the category of graded manifolds of type $L =\Delta/S_n$ to the category of symmetric multiplicity-free manifolds of type $\Delta$.
From Theorem \ref{theor any symmetric is a covering} it follows that $\mathrm{Cov}$ is essentially surjective. 
The functor $\mathrm{Cov}$ is full and faithful by Theorem \ref{teor phi to Phi inhective}.
\end{proof}

\section{About coverings  of graded manifolds in the category of $n$-fold vector bundle}\label{sec coverings in cat of DVB do not exist}

In this section, we show that a covering of a graded manifold in the category of symmetric $n$-fold vector bundles does not exist. (Therefore, to construct a covering we need to replace the category of symmetric $n$-fold vector bundles to the category of symmetric multiplicity free manifolds.) Assume that for any graded manifold $\mcN$ of degree $n$, we can construct an $n$-fold vector bundle $\mcQ$ together with a $\Z$-graded morphism $\qq: \mcQ \to \mcN$, which satisfies the universal property in the category of $n$-fold vector bundles. That is, for any $n$-fold vector bundle $\mcD$ and any $\Z$-graded morphism $\phi: \mcD \to \mcN$, there exists a unique morphism $\Psi: \mcD \to \mcQ$  of $n$-fold vector bundles such that $ \phi= \qq\circ \Phi$.

Without loss of generality, we may assume that $\mcN$, $\mcD$ are domains in the category of graded manifold and $n$-fold vector bundles, respectively. The lift $\Phi$ of $\phi$ is a morphism in the category of $n$-fold vector bundles, hence it preserves the sheaf of ideals $\mcI$ locally generated by elements with multiplicities. Therefore we have
\begin{align*}
\phi^* \mod \mcI = (\Phi^* \mod \mcI) \circ (\qq^*  \mod \mcI). 
\end{align*}
Denote $\tilde \mcD=(\mcD_0, \mcO_{\mcD}/ \mcI)$ and $\mcP=(\mcQ_0, \mcO_{\mcQ}/ \mcI)$. By construction, $\tilde \mcD$ and $\mcP$ are multiplicity-free manifolds. Let $\psi: \tilde \mcD\to \mcN$  be a morphism. Clearly, we can find a morphism $\phi: \mcD \to \mcN$ such that $\psi = \phi \mod \mcI$. Since $\mcQ$ is a covering, we can find a unique lift $\Phi$ of $\phi$.  Hence, $\pp :\mcP \to \mcN$ is a covering in the category of multiplicity-free domains, where $\pp^* := \qq^*  \mod \mcI$. In Section \ref{sec multiplicity-free covering of a graded domain} we saw that such a covering projection has a special form in the standard local coordinates of $\mcP$.  In addition, $\mcP$ and $\mcQ$ have the same dimensions.

Now let $\mcN$ be a graded domain with local graded coordinates $x, \xi^1, \xi^2$, $\mcD$ be a double vector bundle with local coordinates $y, \eta^{\alpha}_1, \eta^{\alpha}_2$ and $\phi: \mcD \to \mcN$ be a $\Z$-graded morphism defined by 
$$
\phi^*(x) = y, \quad \phi^*(\xi^1) = 0, \quad
\phi^*(\xi^2) = \eta^{\alpha}_1\eta^{\alpha}_2.
$$ 
  Then the covering projection $\qq$ must have the following form in the standard local coordinates
  $$
  \qq^*(\xi^2)= F_{2\alpha} + t^{\alpha+\beta} + F_{2\beta}, \quad \qq^*(\xi^1)= t^{\alpha} + t^{\beta},
  $$
  where $F_{2\alpha}$ and $F_{2\beta}$ are functions of weights $2\alpha$ and $2\beta$, respectively. Recall that $\pp^*= \qq^*  \mod \mcI$. Furthermore, the morphism $\Phi^*$ preserves all weights, hence we have
  $$
  \Phi^*( F_{2\alpha}) = \eta^{\alpha}_1\eta^{\alpha}_2, \quad \Phi^* (t^{\alpha}) = 0.
  $$
 Since $\mcQ$ is a double vector bundle, we do not have local coordinates of weight $2 \alpha$, therefore, $F_{2\alpha} \in (\mcO_{\mcQ})_{\alpha} (\mcO_{\mcQ})_{\alpha}$. Since $\Phi^* (t^{\alpha}) = 0$,  $\Phi^*( F_{2\alpha}) =0$. This is a contradiction because 
 $$\eta^{\alpha}_1\eta^{\alpha}_2 =\phi^* (\xi^2) = \Phi^*\circ \qq^*(\xi^2)=0.
 $$

\bigskip

\noindent
E.~V.: Departamento de Matem{\'a}tica, Instituto de Ci{\^e}ncias Exatas,
Universidade Federal de Minas Gerais,
Av. Ant{\^o}nio Carlos, 6627, CEP: 31270-901, Belo Horizonte,
Minas Gerais, BRAZIL.

\bigskip

\noindent Email: {\tt VishnyakovaE\symbol{64}googlemail.com}

\end{document}